  \providecommand{\keywords}[1]{\textbf{Keywords: } #1}
  \DeclareMathOperator*{\minimize}{minimize}
  \newcommand{\st}{\mathrm{subject\;to}}
	\theoremstyle{definition}  
	\newtheorem{proposition}{Proposition}
	\newtheorem{example}{Example}
	\newenvironment{remark}[1][Remark]{\begin{trivlist}
	\item[\hskip \labelsep {\bfseries #1}]}{\end{trivlist}}
\title{A Unified Framework for Adjustable Robust Optimization with Endogenous Uncertainty}
\author[1]{Qi Zhang \thanks{Corresponding author (qizh@umn.edu)}}
\author[2]{Wei Feng}
\affil[1]{Department of Chemical Engineering and Materials Science, \break University of Minnesota, Minneapolis, MN 55455, USA}
\affil[2]{State Key Laboratory of Industrial Control Technology, College of Control Science and Engineering, Zhejiang University, Hangzhou 310027, China}
\date{}
\begin{document}

\maketitle

\begin{abstract}\normalsize
This work proposes a framework for multistage adjustable robust optimization that unifies the treatment of three different types of endogenous uncertainty, where decisions, respectively, (i) alter the uncertainty set, (ii) affect the materialization of uncertain parameters, and (iii) determine the time when the true values of uncertain parameters are observed. We provide a systematic analysis of the different types of endogenous uncertainty and highlight the connection between optimization under endogenous uncertainty and active learning. We consider decision-dependent polyhedral uncertainty sets and propose a decision rule approach that incorporates both continuous and binary recourse, including recourse decisions that affect the uncertainty set. The proposed method enables the modeling of decision-dependent nonanticipativity and results in a tractable reformulation of the problem. We demonstrate the effectiveness of the approach in computational experiments that cover a range of applications, including plant redesign, maintenance planning with inspections, optimizing revision points in capacity planning, and production scheduling with active parameter estimation. The results show significant benefits from the proper modeling of endogenous uncertainty and active learning.
\end{abstract}

\keywords{adjustable robust optimization, endogenous uncertainty, active learning}

\section{Introduction}
\label{sec:Intro}

Robust optimization is an approach to decision making under uncertainty that has gained tremendous popularity in the optimization community over the last two decades. It is particularly attractive in situations in which (i) the set of possible uncertainty realizations, commonly referred to as the uncertainty set, is known but the probability distribution is not, (ii) feasibility over the entire uncertainty set or the worst-case performance is of main interest, or (iii) alternative approaches, such as scenario-based stochastic programming, are computationally significantly less tractable. Earlier works on robust optimization \citep{Ben-Tal1998, Ghaoui1998} were limited to the \textit{static} case, which only considers \textit{here-and-now} decisions that need to be determined before the uncertainty is realized. In contrast, \textit{adjustable} robust optimization (ARO) \citep{Ben-Tal2004} provides a framework that also incorporates \textit{wait-and-see} (or recourse) decisions, which can be determined after the true values of some uncertain parameters are revealed. This makes ARO suitable for a large set of sequential decision-making problems, hence greatly expanding the applicability of the robust optimization paradigm. There has been a plethora of theoretical and practical contributions addressing a variety of ARO problems. A particular research focus has been the development of tractable approximations, e.g. in the form of decision rules \citep{Ben-Tal2004, Georghiou2019}, and solution methods \citep{Bertsimas2013, Zeng2013}, with many of the more recent efforts directed at the efficient handling of discrete recourse decisions \citep{Bertsimas2010a, Hanasusanto2015b, Bertsimas2015a, Postek2016, Bertsimas2018, Subramanyam2020}. We refer the reader to \citet{Yankoglu2019} for a comprehensive survey on ARO.

The term ``adjustable robust optimization'' had not appeared in the process systems engineering (PSE) literature until 2016 \citep{Lappas2016, Zhang2016e, Shi2016}. Only then, as formally discussed by \citet{Zhang2016d}, the PSE community realized that it actually has been applying the concept of (two-stage) ARO for decades, but under a different name: flexibility analysis \citep{Grossmann2014}. Many of the approaches developed in this line of work resemble the ones presented in the robust optimization literature. However, traditional flexibility analysis focuses primarily on nonlinear problems, which may be the reason why it so far has not been recognized or noticed by the operations research community. In the last few years, the number of ARO-related works in PSE has increased rapidly, addressing diverse applications in process design \citep{Yuan2018, Gong2018}, planning and scheduling \citep{Lappas2016, Zhang2016e, Shi2016, Ning2017, Lappas2018a, Feng2019, Rahal2020}, model predictive control \citep{Zhang2017e, Shang2019, Ning2020}, supply chain optimization \citep{Yue2016, Subramanyam2018}, etc.

In stochastic optimization, one distinguishes between \textit{exogenous} and \textit{endogenous} uncertainty. While exogenous uncertainty is not affected by the decision maker's actions, certain properties of endogenous uncertain parameters are decision-dependent. Endogenous uncertainty has its origin in stochastic programming \citep{Jonsbraten1998}, and mainly two types of endogenous uncertainty have been considered in the literature: \textit{type 1} where decisions alter the underlying probability distribution of an uncertain parameter \citep{Peeta2010, Escudero2018, Hellemo2018}, and \textit{type 2} where decisions affect the time at which an uncertain parameter materializes or its true value is revealed. The vast majority of existing works  address type-2 endogenous uncertainty \citep{Goel2004, Goel2006, Colvin2008, Colvin2010, Vayanos2011, Tarhan2013, Gupta2014, Christian2015, Hooshmand2016, Apap2017}. Here, the main challenge is the modeling of decision-dependent nonanticipativity, which is typically achieved by encoding a conditional scenario tree. The size of this scenario tree grows exponentially with the number of uncertain parameters and decisions affecting the uncertainty, which leads to dramatically increased computational complexity compared to the case with only exogenous uncertainty. Hence, most research efforts have focused on the development of more efficient formulations \citep{Goel2006, Colvin2008, Vayanos2011, Hooshmand2016, Apap2017} and solution methods \citep{Goel2006, Tarhan2013, Gupta2014, Colvin2010, Christian2015, Apap2017}. We refer to \citet{Apap2017} for a comprehensive review of stochastic programming with endogenous uncertainty.

Only recently, endogenous uncertainty has also been considered in robust optimization. \citet{Poss2013} and \citet{Nohadani2018} address static robust optimization with decision-dependent polyhedral uncertainty sets. A similar approach is taken by \citet{Lappas2018}, who further extend their framework to consider multistage robust process scheduling where the materialization of task-specific uncertainties, such as processing time and production yield, depends on the execution of the task \citep{Lappas2016, Lappas2018a}; however, the uncertainty set is only affected by first-stage decisions, and all recourse variables are continuous. \citet{Avraamidou2019} apply multiparametric programming to address endogenous uncertainty in two-stage robust optimization. Finally, \citet{Feng2020} consider the multistage case with both continuous and binary recourse as well as uncertainty sets that can be affected by decisions at every stage, for which a decision rule approach is applied to derive tractable approximations.

The above-mentioned references investigate problems in which decisions directly affect the stochastic nature of uncertain parameters, which in the robust optimization context translates into uncertainty sets whose shape, size, or dimensionality may be decision-dependent. Also, although the materialization of uncertain parameters can be decision-dependent, it is assumed that the true values of uncertain parameters are observed immediately after their materialization. In a series of recent works, Vayanos and coworkers \citep{Bertsimas2014a, Vayanos2020, Vayanos2020a} consider robust optimization with decision-dependent information discovery \citep{Vayanos2011}, where decisions determine whether and when uncertain parameters are observed. This constitutes a conceptually different type of endogenous uncertainty, where, instead of an underlying stochastic process, the source of uncertainty is the lack of information, which can be acquired with additional effort. In this case, the uncertainty set is reduced (``shrinks'') with additional observations. The proposed framework provides a means of modeling \textit{active learning}, which refers to selecting experiments sequentially based on the outcomes of previous experiments. It is hence a promising approach to optimizing the trade-off between \textit{exploration} and \textit{exploitation} in sequential decision-making problems, particularly those involving complex constraints. Existing works have applied this approach to the learning of customer behavior through dynamic pricing \citep{Bertsimas2014a} and active preference elicitation with application to housing allocation \citep{Vayanos2020a}.

We have come to realize that many problems in PSE require the simultaneous consideration of multiple types of endogenous uncertainty. Examples include strategic capacity planning, design of sensor networks, integrated online process optimization and parameter estimation, and maintenance planning with inspections. To address these problems, we need an approach that can consider all previously described types of endogenous uncertainty simultaneously, which, to the best of our knowledge, does not yet exist in the literature. In this work, we develop such a unified ARO framework, which significantly expands our capability to model data-driven optimization problems under uncertainty, particularly those involving active learning. The main contributions of this work are as follows:
\begin{itemize}
\item We introduce a new refined classification of endogenous uncertainty that more comprehensively reflects the current understanding of the subject. We further provide a unifying robust optimization perspective and highlight the connection between endogenous uncertainty and active learning.
\item We present two-stage and multistage robust optimization formulations and show that they can be used to simultaneously consider all defined types of endogenous uncertainty subject to polyhedral uncertainty sets.
\item A decision rule approach based on previous work \citep{Feng2020} is employed to develop tractable approximations of the given problems, which involve both continuous and binary recourse variables. One major novelty of this work is the use of auxiliary uncertain parameters to model decision-dependent nonanticipativity in the multistage setting. We derive a reformulation that can be directly solved using off-the-shelf optimization solvers.
\item The versatility and efficacy of the proposed modeling framework are demonstrated in several computational case studies, which include plant redesign, maintenance planning with inspections, optimizing revision points in capacity planning, and production scheduling with active parameter estimation.
\end{itemize}

The remainder of this paper is organized as follows. In Section \ref{sec:Classification}, we propose a new refined classification of endogenous uncertainty and systematically analyze the different types of endogenous uncertainty from a robust optimization perspective. In Section \ref{sec:ActiveLearning}, the connection between endogenous uncertainty and active learning is established. We develop the two-stage formulation incorporating all types of endogenous uncertainty in Section \ref{sec:TwoStage} before presenting the multistage formulation in Section \ref{sec:Multistage}. The proposed decision rule approach is detailed in Section \ref{sec:DecisionRules}. In Section \ref{sec:CaseStudies}, results from the computational studies are presented. Finally, in Section \ref{sec:Conclusions}, we close with some concluding remarks.

\paragraph{Notation} We use lowercase and uppercase boldface letters to denote vectors and matrices, respectively, e.g. $\bm{x} \in \mathbb{R}^n$ and $\bm{A} \in \mathbb{R}^{n \times m}$. Scalar quantities are denoted by non-boldface letters. A vector $\bm{x}$'s $i$th element is denoted by $x_i$. We use $\circ$ and $\mathbbm{1}(\cdot)$ to denote the Hadamard multiplication operator and the indicator function, respectively. Furthermore, $\bm{0}$, $\bm{e}$, and $\bm{E}$ are the zero vector, all-ones vector, and all-ones matrix, respectively, while $\bm{e}_i$ is the standard basis vector whose $i$th element is 1; their dimensions can be inferred from the context.

\section{Endogenous Uncertainty Through the Robust Optimization Lens}
\label{sec:Classification}

As mentioned in Section \ref{sec:Intro}, the literature distinguishes between type-1 and type-2 endogenous uncertainty. Here, we further refine the type-2 classification and propose the following definition of \textit{three} different types of endogenous uncertainty:
\begin{itemize}
\item Type 1: Decisions alter the probability distribution of uncertain parameters.
\item Type 2a: Decisions determine whether and when uncertain parameters materialize, i.e. actually become physically meaningful.
\item Type 2b: Decisions determine whether and when the true values of uncertain parameters are observed.
\end{itemize}
Endogenous uncertain parameters can be of more than one type. For example, a company may alter the probability distribution of the demand for its product by changing the selling price, in which case the product demand is a type-1 endogenous uncertain parameter. In addition, if we still have to decide whether or not to launch the product in the first place, the product demand is also type-2a endogenous since it will only materialize if the product is actually available on the market. Moreover, a customer's product demand is observed when the order is placed; it is type-2b endogenous if we can affect the time of the order, e.g. by specifying it in a contract.

In robust optimization, instead of specifying a probability distribution, uncertainty is described using an uncertainty set, which is the set of all realizations of the uncertain parameters considered possible in the problem. In other words, robust optimization only requires the support of the probability distribution. A two-stage robust optimization problem can be generally formulated as follows:
\begin{equation}
\label{eqn:TwoStageRO}
\begin{aligned}
  \minimize_{\bm{x} \in \mathcal{X}, \, \bm{\tilde{x}}} \quad & \max_{\bm{\xi} \in \Xi} f(\bm{x},\bm{\tilde{x}}(\bm{\xi}),\bm{\xi}) \\
  \st \quad & \bm{g}(\bm{x},\bm{\tilde{x}}(\bm{\xi}),\bm{\xi}) \leq \bm{0}, \; \bm{\tilde{x}}(\bm{\xi}) \in \widetilde{\mathcal{X}} \quad \forall \, \bm{\xi} \in \Xi,
\end{aligned}
\end{equation}
where $\bm{x} \in \mathcal{X}$ are the first-stage variables and $\bm{\tilde{x}} \in \widetilde{\mathcal{X}}$ are the second-stage variables, which are functions of the uncertain parameters $\bm{\xi} \in \mathbb{R}^K$. The objective and constraint functions are denoted by $f$ and $\bm{g}$, respectively. Problem \eqref{eqn:TwoStageRO} minimizes the worst-case cost while ensuring that the constraints are satisfied for all $\bm{\xi} \in \Xi$. By applying an epigraph reformulation, a problem of form \eqref{eqn:TwoStageRO} can be reformulated into a problem of the following form:
\begin{equation}
\label{eqn:TwoStageRO_epi}
\begin{aligned}
  \minimize_{\bm{x} \in \mathcal{X}, \, \bm{\tilde{x}}} \quad & x_1 \\
  \st \quad & \bm{g}(\bm{x},\bm{\tilde{x}}(\bm{\xi}),\bm{\xi}) \leq \bm{0}, \; \bm{\tilde{x}}(\bm{\xi}) \in \widetilde{\mathcal{X}} \quad \forall \, \bm{\xi} \in \Xi,
\end{aligned}
\end{equation}
where the objective function is deterministic. Formulation \eqref{eqn:TwoStageRO_epi} will be used in the remainder of this paper as it is more convenient for our discussion and subsequent reformulations.

\begin{remark}
There is a common misconception that a robust optimization problem has to consider a worst-case cost function. As it is apparent from \eqref{eqn:TwoStageRO_epi}, in theory, one can also consider any deterministic or deterministic equivalent (e.g. expressed in terms of scenarios) cost function. Constraint satisfaction with respect to the entire uncertainty set can be considered independently from the choice of objective function. Often, a non-worst-case objective function is more appropriate (see Section \ref{sec:CaseStudies} for two examples).
\end{remark}

In the case of endogenous uncertainty in the two-stage setting, the uncertainty depends on the first-stage decisions $\bm{x}$. For each type of endogenous uncertainty, we show how $\bm{x}$ affect the following three major components of problem \eqref{eqn:TwoStageRO_epi}:
\begin{itemize}
\item the uncertainty set $\Xi$, which is constant in the case of only exogenous uncertainty but depends on $\bm{x}$ if the uncertainty is endogenous, i.e. $\Xi = \Xi(\bm{x})$;
\item the \textit{deterministic feasible set} (DFS) defined as
\begin{equation*}
\mathcal{F}(\bm{x},\bm{\xi}) := \left\lbrace \bm{\tilde{x}} \in \widetilde{\mathcal{X}}: \bm{g}(\bm{x},\bm{\tilde{x}},\bm{\xi}) \leq \bm{0} \right\rbrace,
\end{equation*}
which is the set of feasible second-stage solutions given $\bm{x}$ and $\bm{\xi}$;
\item and the second-stage recourse decisions $\bm{\tilde{x}}(\bm{\xi})$ as functions of $\bm{\xi}$.
\end{itemize}

\paragraph{Type 1 (uncertainty alteration)} The probability distribution of type-1 endogenous uncertain parameters including its support can be altered by decisions. From the robust optimization perspective, this means that the shape and size of the uncertainty set $\Xi(\bm{x})$ depend on the first-stage decisions $\bm{x}$. For an example involving two uncertain parameters, $\xi_A$ and $\xi_B$, Figure \ref{fig:Type1} shows the uncertainty sets for two different choices of $\bm{x}$. One results in a polyhedral uncertainty set with four facets while the other results in a polytope with three facets.

\begin{figure}\centering
\subfloat[Type 1]{
  \label{fig:Type1}
	\fbox{\includegraphics[width=2in]{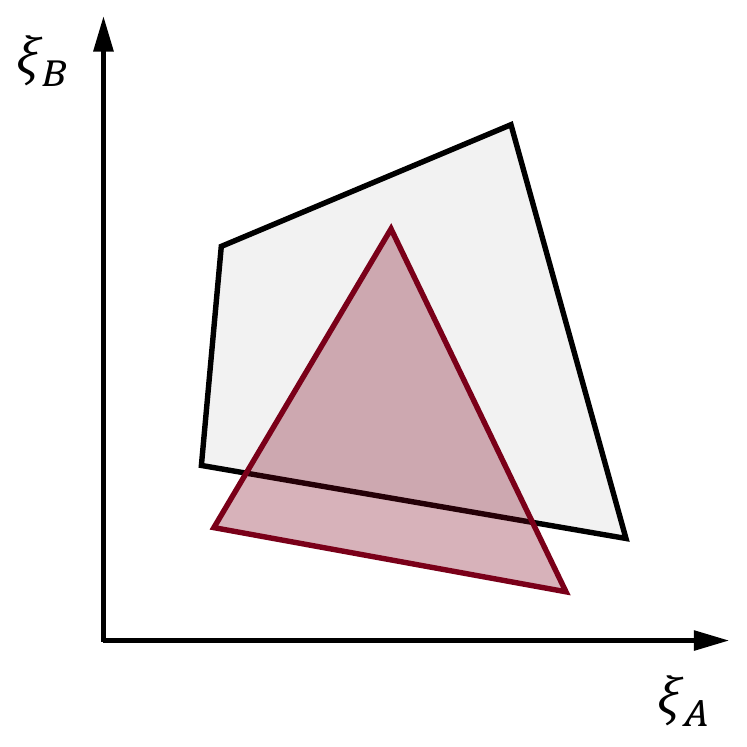}}} 
\subfloat[Type 2a]{
  \label{fig:Type2a}
	\fbox{\includegraphics[width=2in]{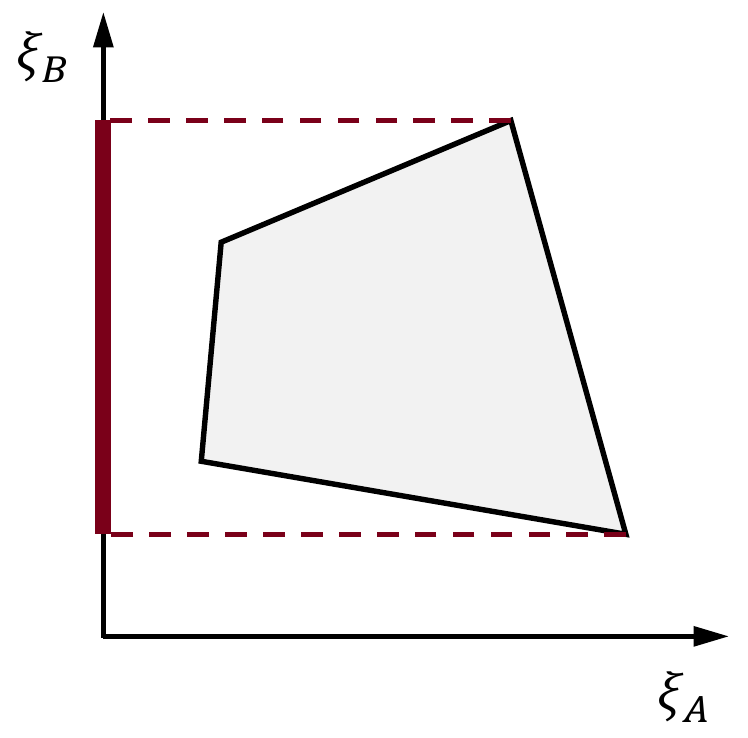}}} 
\subfloat[Type 2b]{
  \label{fig:Type2b}
	\fbox{\includegraphics[width=2in]{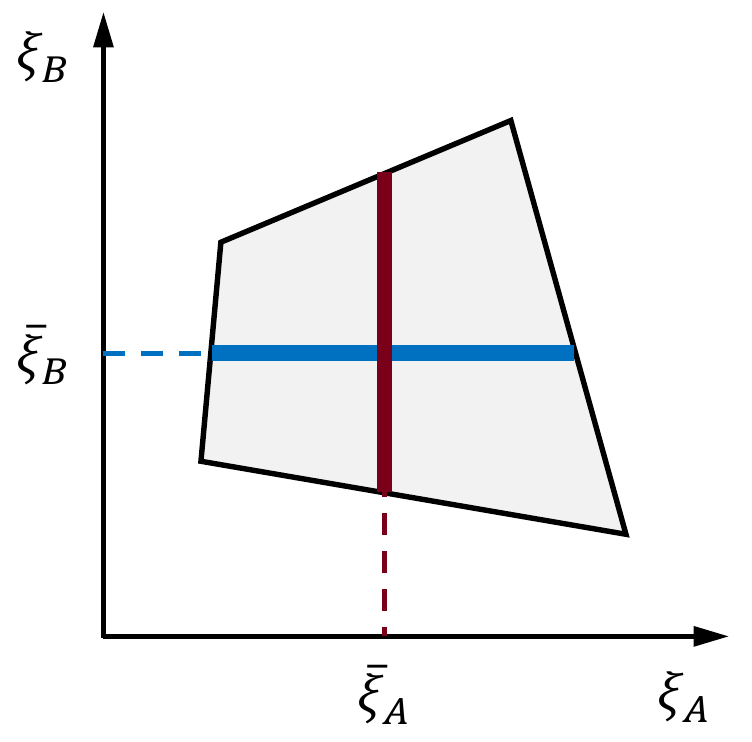}}}
\caption{Decisions affect the uncertainty set differently depending on the type of endogenous uncertainty: (a) type 1: shape and size of the uncertainty set may be altered; (b) type 2a: dimensionality of the uncertainty set changes depending on which uncertain parameters materialize; (c) type 2b: uncertainty set does not change, but its reduction over time does, i.e. it only reduces if realization of uncertainty is observed.}
\label{fig:UncertaintySets}
\end{figure}

\paragraph{Type 2a (uncertainty materialization)} In the case of type-2a endogenous uncertainty, $\bm{x}$ change the uncertainty set by affecting which uncertain parameters actually materialize. In the example shown in Figure \ref{fig:Type2a}, $\Xi(\bm{x})$ is the gray shaded polytope if both $\xi_A$ and $\xi_B$ materialize. However, if $\bm{x}$ are chosen such that $\xi_A$ does not materialize, $\Xi(\bm{x})$ becomes the solid red line segment that is a one-dimensional set for $\xi_B$; hence, $\bm{x}$ affect the dimensionality of the uncertainty set. Generally, when some uncertain parameters do not materialize, $\Xi(\bm{x})$ becomes the projection of the full-dimensional uncertainty set onto the space of the materialized uncertain parameters.

If an uncertain parameter does not materialize, it is physically meaningless and should therefore be irrelevant for the problem. More precisely, this means that the following DFS dependence condition has to hold: if $\bm{x}$ are chosen such that some uncertain parameters do not materialize, then $\mathcal{F}(\bm{x},\bm{\xi})$ has to be independent of those unmaterialized parameters. It is important to note that decision-dependent materialization of parameters is not a feature of uncertainty. It equally occurs in deterministic problems. Hence, the deterministic model, in which $\bm{\xi}$ is fixed, should already satisfy the DFS dependence condition, which usually directly extends to the robust formulation.

Finally, recourse decisions cannot depend on unmaterialized parameters. This, however, does not imply that recourse decisions are functions of all materialized parameters. While materialization is a necessary condition for an uncertain parameter to be considered in the recourse function, the sufficient condition is that its true value is observed. This subtle but important distinction is often ignored as almost all existing works on endogenous uncertainty assume that once an uncertain parameter materializes, its true value is automatically observed.

\paragraph{Type 2b (uncertainty observation)} In the case of type-2b endogenous uncertainty, we can decide if and when (in the multistage case) the true values of uncertain parameters are observed. Here, the underlying probability distribution and hence the uncertainty set is unaffected, but the reduction of uncertainty over time depends on when and which observations are made. This is illustrated in Figure \ref{fig:Type2b}, which shows an instance in which the true values of the uncertain parameters $\xi_A$ and $\xi_B$ are $\bar{\xi}_A$ and $\bar{\xi}_B$, respectively. If both parameters are observed, there will be no uncertainty in the second stage as the uncertainty set reduces to the point $(\bar{\xi}_A,\bar{\xi}_B)$. However, if we only observe $\xi_A$ and find it to be $\bar{\xi}_A$, the uncertainty with respect to $\xi_B$ remains and its reduced uncertainty set becomes the red solid line. Similarly, if we only observe $\xi_B$, the reduced uncertainty set for $\xi_A$ becomes the blue solid line. Note that the reduced uncertainty set is conditional, i.e. it depends on the observations made. Clearly, a parameter can only be observed if it materializes. Also, reiterating our previous remark, recourse decisions can only be functions of observed uncertain parameters.

The above discussion references the two-stage robust optimization problem but naturally extends to the multistage case. An overview of the characteristics of the different types of endogenous uncertainty is shown in Table \ref{tab:Classification}.

\begin{table}[ht]\centering
\setlength\tabcolsep{2pt}
	\caption{Overview of the impact of decisions on the uncertainty set, the dependence of the DFS on uncertain parameters, and the dependence of the recourse functions on uncertain parameters for the different types of endogenous uncertainty.}
	\begin{tabular}{ccccccccc}
	\toprule
	 	&&& \textbf{Uncertainty Set} && \textbf{DFS Dependence} && \textbf{Recourse Dependence} & \\ \midrule
	  & \textbf{Type 1}   && shape, size && unaffected && unaffected \\ \midrule
	  & \textbf{Type 2a}  && dimensionality && \makecell{only dependence on \\ materialized parameters} && \makecell{no dependence on \\ unmaterialized parameters} \\ \midrule
	  & \textbf{Type 2b}  && reduction over time && unaffected && \makecell{only dependence on \\ observed parameters} & \\
	\bottomrule
	\end{tabular}
	\label{tab:Classification}
\end{table}

\section{Endogenous Uncertainty and Active Learning}
\label{sec:ActiveLearning}

In the context of statistical learning, \textit{active learning} is the concept of actively gathering information for the purpose of statistical inference. It is also referred to as optimal experimental design as the decisions regarding information discovery are made with an objective in mind, such as minimizing the number of data points required to achieve a given level of confidence in the resulting statistical model. In this section, we highlight the connection between active learning and endogenous uncertainty, and show how it can be conceptualized in a set-based robust optimization framework.

The goal of learning is to gain insights into unknown properties, which can be viewed as uncertain parameters whose level of uncertainty we wish to reduce. Hence, learning is a means of uncertainty reduction. It does so by making new observations that \textit{condition} the probability distribution; it does not \textit{alter} the underlying distribution, which is an important distinction. With this perspective, we can model active learning using the notion of endogenous uncertainty. Let $\bm{\xi} \in \mathbb{R}^K$ be the uncertain parameters of interest and $\bm{\zeta} \in \mathbb{R}^M$ be uncertain parameters that can be observed before the realization of $\bm{\xi}$. Before observing any $\bm{\zeta}$, the probability distribution of $\bm{\xi}$ is $\mathbb{P}(\bm{\xi})$. The decision maker can now choose which $\bm{\zeta}$ to observe in order to improve the current information state with regard to $\bm{\xi}$, indicated by the binary variables $\bm{z} \in \{0,1\}^M$ where $z_i$ equals 1 if $\zeta_i$ is observed and 0 otherwise. After observing the chosen parameters, the updated probability distribution becomes $\mathbb{P}(\bm{\xi} \mid \zeta_i=\hat{\zeta}_i \; \forall \, i \text{ for which } z_i=1)$, where $\bm{\hat{\zeta}}$ denotes a specific observed realization of $\bm{\zeta}$. In a Bayesian sense, $\mathbb{P}(\bm{\xi})$ and $\mathbb{P}(\bm{\xi} | \bm{\zeta})$ can be interpreted as the prior and posterior distributions, respectively. Clearly, in this setting, $\bm{\zeta}$ are type-2b endogenous uncertain parameters. They may also be type-2a endogenous if their materialization is also decision-dependent. Note that $\bm{\xi}$ are considered exogenous uncertain parameters if their materialization and observation are not decision-dependent and if $\mathbb{P}(\bm{\xi})$ and $\mathbb{P}(\bm{\xi} | \bm{\zeta})$ are fixed.

In a robust optimization setting, uncertainty reduction implies reducing the uncertainty set. Using the same notation as above, uncertainty reduction through active learning can then be characterized as follows: Let $\Omega$ be the uncertainty set for the uncertain parameters $(\bm{\xi}, \bm{\zeta})$ before the observation of any $\bm{\zeta}$, i.e. $(\bm{\xi}, \bm{\zeta}) \in \Omega$. Once some $\bm{\zeta}$ are observed (indicated by the binary variables $\bm{z}$) with the realization $\bm{\hat{\zeta}}$, the uncertainty set reduces to $\bar{\Omega}(\bm{z}, \bm{\hat{\zeta}}) = \{(\bm{\xi}, \bm{\zeta}): (\bm{\xi}, \bm{\zeta}) \in \Xi, \; \bm{\zeta} = \bm{z} \circ \bm{\hat{\zeta}}\}$. Note that $\bar{\Omega}(\bm{z}, \bm{\hat{\zeta}}) \subseteq \Omega$ for every possible $\bm{\hat{\zeta}}$ as specified by $\Omega$.

\begin{example}
\label{exp:PilotPlant}
We consider investing in a large-scale industrial manufacturing plant, where the real plant capacity $q$ is unknown before it is actually built. To reduce the financial risk, we have the option of building a small-scale pilot plant whose achieved capacity $p$ can help better predict the capacity of the industrial-scale plant. The uncertainty set depicted in Figure \ref{fig:Pilot_plant} shows that $p$ and $q$ are highly correlated. If no pilot plant is built, the uncertain parameter $p$ does not materialize. As a result, we can only rely on our initial belief that the marginal uncertainty set for $q$ is $\mathcal{Q} = [q^{\min}, q^{\max}]$. However, if we do build a pilot plant and observe a pilot plant capacity of $\bar{p} \in [p^{\min}, p^{\max}]$, the uncertainty set for $q$ reduces to a significantly smaller $\bar{\mathcal{Q}}(\bar{p}) = \{q: \alpha + \beta \bar{p} - \Delta \leq q \leq \alpha + \beta \bar{p} + \Delta\}$. Note that in this example, $p$ is a type-2a and type-2b uncertain parameter since it only materializes (and can then be observed) if we decide to build the pilot plant.

\begin{figure}[ht!]\centering
\includegraphics[width=3.5in]{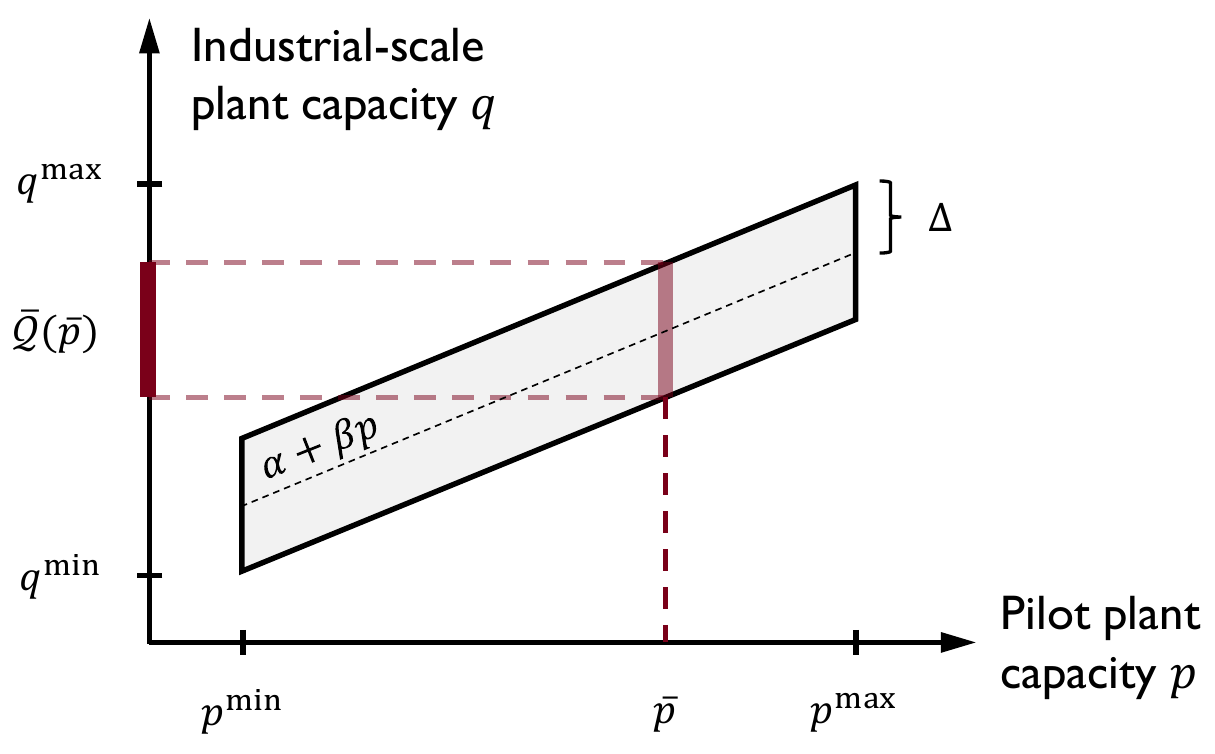}
\caption{The uncertainty set for the industrial-scale plant capacity is reduced by observing the capacity of a pilot plant.}
\label{fig:Pilot_plant}
\end{figure} 
\end{example}

In Example \ref{exp:PilotPlant}, we use the term \textit{marginal uncertainty set} and define it as the projection of the full-dimensional uncertainty set onto the space of a subset of variables, in this case $q$. Generally, given uncertain parameters $\bm{\xi}$ and $\bm{\zeta}$, observing $\bm{\zeta}$ further restricts the marginal uncertainty set for $\bm{\xi}$, denoted by $\Xi$, only if the uncertain parameters are correlated. The geometric interpretation is depicted in Figure \ref{fig:Cuts}, where an observation $\bm{\hat{\zeta}}$ results in cutting planes (indicated by the red dashed lines) added to $\Xi$ such that the updated marginal uncertainty set for $\bm{\xi}$ becomes $\overline{\Xi}(\bm{\hat{\zeta}})$.

\begin{figure}[ht!]\centering
\includegraphics[width=4.5in]{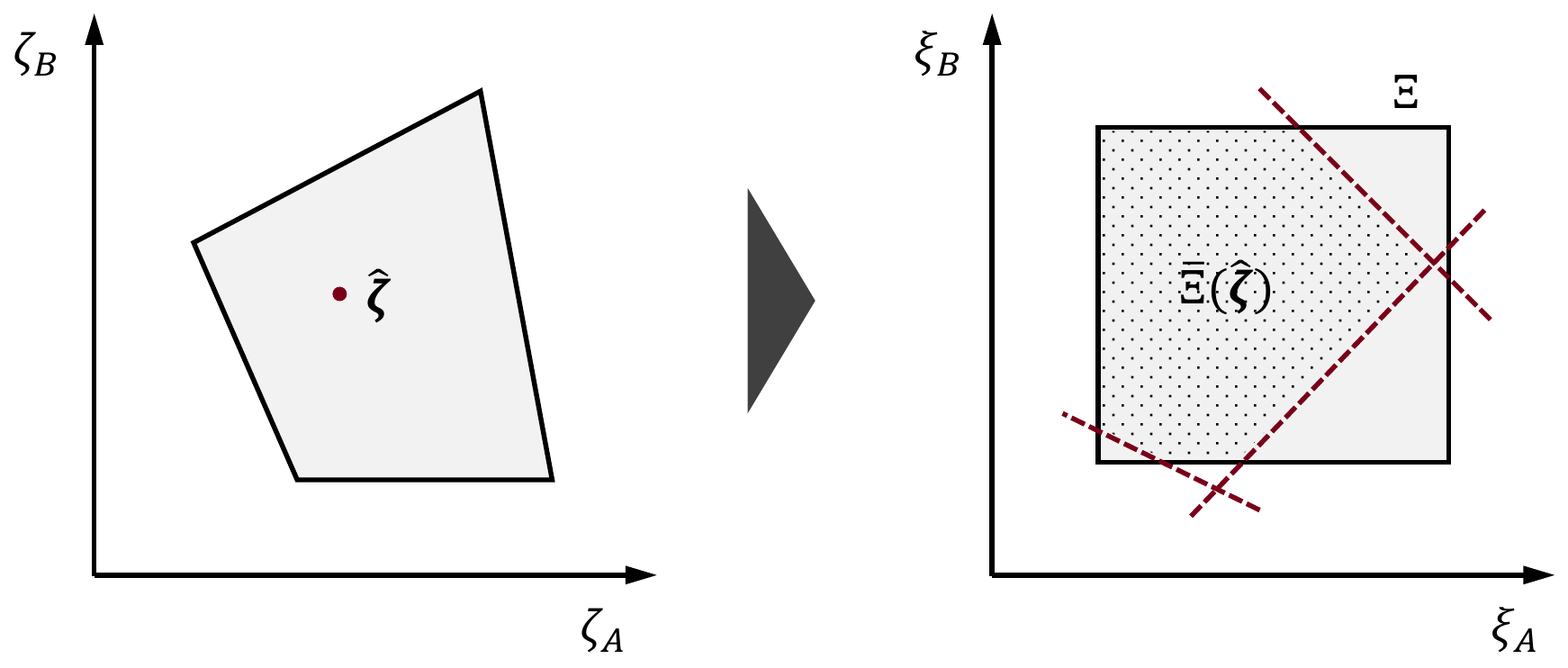}
\caption{The marginal uncertainty set for $\bm{\xi}$ is reduced from $\Xi$ (shaded rectangle) to $\overline{\Xi}(\bm{\hat{\zeta}})$ (pattern-filled polytope) by added cutting planes corresponding to observation $\bm{\hat{\zeta}}$.}
\label{fig:Cuts}
\end{figure} 

Modeling active learning as a robust optimization problem with endogenous uncertainty provides an opportunity for integrated optimization and learning, which are two tasks typically performed separately. Central to this problem is the trade-off between exploration and exploitation since our actions affect both the amount of new information and the reward that we receive. The proposed framework can readily incorporate this trade-off and is particularly well suited for problems with complex constraints and recourse decisions, which often cause difficulties for many alternative methods, such as Bayesian optimization \citep{Shahriari2016}, reinforcement learning \citep{Sutton2018}, and multi-armed bandit optimization \citep{Audibert2010}.

\section{Two-Stage Robust Optimization with Endogenous Uncertainty}
\label{sec:TwoStage}

In the following, we show how type-1, type-2a, and type-2b endogenous uncertainty can be incorporated in a two-stage robust optimization formulation, eventually leading to a unified model that can capture all these types of endogenous uncertainty simultaneously.

\subsection{Incorporating Type-1 Endogenous Uncertainty}

Consider the following two-stage robust optimization problem:
\begin{equation}
\label{eqn:TwoStage_type1}
\begin{aligned}
\minimize_{\bm{x}, \, \bm{\tilde{x}}} \quad & x_1 \\
\st \quad & \bm{x} \in \mathcal{X} \\
& \hspace{-8pt} \left.
\begin{array}{l}
  \bm{A}(\bm{\xi}) \bm{x} + \bm{\widetilde{A}}(\bm{\xi}) \bm{\tilde{x}}(\bm{\xi}) \leq \bm{b}(\bm{\xi}) \\[4pt]
  \bm{\tilde{x}}(\bm{\xi}) \in \widetilde{\mathcal{X}}
\end{array}
\right\rbrace \quad \forall \, \bm{\xi} \in \Xi(\bm{x}),
\end{aligned}
\end{equation}
where the dependence of the uncertainty set $\Xi$ on $\bm{x}$ indicates type-1 endogenous uncertainty. Throughout this work, we restrict our discussion to decision-dependent uncertainty sets of the following polyhedral form:
\begin{equation}
\label{eqn:UncertaintySet_type1}
\Xi(\bm{x}) = \left\lbrace \bm{\xi} \in \mathbb{R}^K: \bm{W} \bm{\xi} \leq \bm{U} \bm{x} \right\rbrace,
\end{equation}
where we assume that $\Xi(\bm{x})$ is always nonempty and bounded, i.e. $\Xi(\bm{x}) \subseteq \{\bm{\xi} \in \mathbb{R}^K: \bm{\xi}^{\min} \leq \bm{\xi} \leq \bm{\xi}^{\max}\}$ for any $\bm{x} \in \mathcal{X}$.

\begin{remark}
Some endogenous uncertainty can be modeled as exogenous uncertainty through a simple transformation. For example, consider a scalar endogenous uncertain parameter $\xi$ with the uncertainty set $\Xi(x) = \{\xi \in \mathbb{R}: 0 \leq \xi \leq x\}$, which depends on variable $x$. In this case, we could define a new uncertain parameter $\tilde{\xi}$ and add the equation $\xi = \tilde{\xi} x$ to the model. The uncertainty set for $\tilde{\xi}$ is then $\widetilde{\Xi} = \{\tilde{\xi} \in \mathbb{R}: 0 \leq \tilde{\xi} \leq 1\}$, which does not depend on $x$, and $\xi$ can now be treated as a variable that is adjustable with respect to $\tilde{\xi}$.
\end{remark}

\subsection{Incorporating Type-2a Endogenous Uncertainty}

To model type-2a endogenous uncertainty, we introduce binary variables $\bm{y} \in \{0,1\}^K$ to encode the materialization of uncertain parameters, i.e. $y_i = 1$ if and only if $\xi_i$ materializes. Then, both type-1 endogenous uncertainty with decision-dependent uncertainty set of the form \eqref{eqn:UncertaintySet_type1} and type-2a endogenous uncertainty can be captured in a two-stage formulation of the following form:
\begin{equation}
\label{eqn:TwoStage_type2a}
\begin{aligned}
\minimize_{\bm{x}, \, \bm{y}, \, \bm{\tilde{x}}} \quad & x_1 \\
\st \quad & \bm{x} \in \mathcal{X}, \; \bm{y} \in \mathcal{Y} \\
& \hspace{-8pt} \left.
\begin{array}{l}
  \bm{A}(\bm{\xi}) \bm{x} + \bm{D}(\bm{\xi}) \bm{y} + \bm{\widetilde{A}}(\bm{\xi}) \bm{\tilde{x}}(\bm{y} \circ \bm{\xi}) \leq \bm{b}(\bm{\xi}) \\[4pt]
  \bm{\tilde{x}}(\bm{y} \circ \bm{\xi}) \in \widetilde{\mathcal{X}}
\end{array}
\right\rbrace \quad \forall \, \bm{\xi} \in \Xi(\bm{x},\bm{y}),
\end{aligned}
\end{equation}
where $\mathcal{Y} \subseteq \{0,1\}^K$ and
\begin{equation}
\label{eqn:UncertaintySet_type2a}
\Xi(\bm{x},\bm{y}) = \left\lbrace \bm{\xi} \in \mathbb{R}^K: \bm{W} \bm{\xi} \leq \bm{U} \bm{x} + \bm{\overline{U}} \bm{y} \right\rbrace,
\end{equation}
assuming that $\Xi(\bm{x},\bm{y}) \subseteq \{\bm{\xi} \in \mathbb{R}^K: \bm{\xi}^{\min} \leq \bm{\xi} \leq \bm{\xi}^{\max}\}$ for any $\bm{x} \in \mathcal{X}$ and $\bm{y} \in \mathcal{Y}$. Note that formulation \eqref{eqn:TwoStage_type2a} is actually of the same form as \eqref{eqn:TwoStage_type1}; we have merely augmented the first-stage variables with $\bm{y}$ and restricted the recourse functions to only depend on materialized parameters, i.e. $\bm{y} \circ \bm{\xi}$. Here, the assumption is that all materialized uncertain parameters are also observed.

While the recourse dependence property of type-2a endogenous uncertainty is explicitly stated, it may not be immediately obvious how the change of dimensionality of the uncertainty set and the DFS dependence property can be captured in \eqref{eqn:TwoStage_type2a}. Here, the uncertainty set $\Xi(\bm{x},\bm{y})$ has to be constructed such that for a given $\bm{y}$, its projection onto the space of materialized uncertain parameters, i.e. $\xi_i$ for which $y_i=1$, forms the desired uncertainty set for the given $\bm{y}$. In addition, the constraints in \eqref{eqn:TwoStage_type2a} have to be constructed such that the feasible region is not affected by any $\xi_i \in \Xi_i(\bm{x},\bm{y})$ for which $y_i=0$, where $\Xi_i(\bm{x},\bm{y})$ denotes the projection of $\Xi(\bm{x},\bm{y})$ onto the $\xi_i$-space. Uncertainty sets and constraints satisfying these requirements can be constructed in various ways. To show the generality of formulation \eqref{eqn:TwoStage_type2a}, we provide the following result.

\begin{proposition}
\label{prp:TwoStage_type2a}
\textit{Formulation \eqref{eqn:TwoStage_type2a} is general in a sense that it can model the general case in which every possible set of materialized uncertain parameters results in a distinct set of constraints and a distinct $\bm{x}$-dependent uncertainty set of appropriate dimensionality.}
\end{proposition}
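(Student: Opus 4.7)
The plan is a constructive proof. Given any family of materialization-indexed uncertainty sets $\{\Xi^{\bm{y}^*}(\bm{x})\}_{\bm{y}^* \in \mathcal{Y}}$ and constraint systems $\{(\bm{A}^{\bm{y}^*}, \bm{\widetilde{A}}^{\bm{y}^*}, \bm{b}^{\bm{y}^*})\}_{\bm{y}^* \in \mathcal{Y}}$---each living in the reduced space of materialized parameters $\bm{\xi}_{S(\bm{y}^*)}$ with $S(\bm{y}^*) := \{i : y_i^* = 1\}$---I will fill in the global matrices $\bm{W}, \bm{U}, \bm{\overline{U}}, \bm{A}(\cdot), \bm{D}, \bm{\widetilde{A}}(\cdot), \bm{b}(\cdot)$ of \eqref{eqn:TwoStage_type2a}--\eqref{eqn:UncertaintySet_type2a} so that fixing $\bm{y} = \bm{y}^*$ recovers exactly the prescribed data. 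The key device is a linear-in-$\bm{y}$ big-M indicator that selects which rows are active and that drops directly into the slack provided by the $\bm{\overline{U}}\bm{y}$ and $\bm{D}\bm{y}$ terms.

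For each $\bm{y}^* \in \mathcal{Y}$, introduce the switch
\[
  h^{\bm{y}^*}(\bm{y}) \;=\; \sum_{i:\,y_i^*=0} y_i \;+\; \sum_{i:\,y_i^*=1} (1-y_i),
\]
which is affine in $\bm{y}$ and equals zero if and only if $\bm{y} = \bm{y}^*$. I build $\Xi(\bm{x},\bm{y})$ by stacking, for every $\bm{y}^* \in \mathcal{Y}$, the rows of the prescribed defining inequalities of $\Xi^{\bm{y}^*}(\bm{x})$ (extended by zero columns in the unmaterialized coordinates) and adding $M\, h^{\bm{y}^*}(\bm{y})$ to each such row's right-hand side; this is realizable because the slope of $h^{\bm{y}^*}$ is carried by $\bm{\overline{U}}$, while the constant is absorbed into $\bm{U}\bm{x}$ through an always-one auxiliary coordinate if needed. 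I then append the global box $\bm{\xi}^{\min} \leq \bm{\xi} \leq \bm{\xi}^{\max}$ to keep the set bounded. The triple $(\bm{A}(\bm{\xi}), \bm{D}, \bm{\widetilde{A}}(\bm{\xi}), \bm{b}(\bm{\xi}))$ is built the same way: for every $\bm{y}^* \in \mathcal{Y}$ stack $\bm{A}^{\bm{y}^*}(\bm{\xi}_{S(\bm{y}^*)})\bm{x} + \bm{\widetilde{A}}^{\bm{y}^*}(\bm{\xi}_{S(\bm{y}^*)})\bm{\tilde{x}} \leq \bm{b}^{\bm{y}^*}(\bm{\xi}_{S(\bm{y}^*)})$, padded with zero dependence on $\xi_i$ for $i \notin S(\bm{y}^*)$, and deactivate each block by a $-M\, h^{\bm{y}^*}(\bm{y})$ term on the left-hand side carried through the appropriate row of $\bm{D}\bm{y}$ (with a constant shift absorbed in $\bm{b}$).

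Verification proceeds row by row. Fix any $\bm{y}^* \in \mathcal{Y}$ and $\bm{x} \in \mathcal{X}$. For any row indexed by some $\bm{y}^\dagger \neq \bm{y}^*$, one has $h^{\bm{y}^\dagger}(\bm{y}^*) \ge 1$, so for $M$ larger than the maximum possible violation over the compact sets $\mathcal{X}$ and $[\bm{\xi}^{\min}, \bm{\xi}^{\max}]$, that row is trivially satisfied and contributes nothing. Rows indexed by $\bm{y}^*$ itself have $h^{\bm{y}^*}(\bm{y}^*) = 0$ and collapse to the prescribed system. It follows that the projection of $\Xi(\bm{x}, \bm{y}^*)$ onto $\bm{\xi}_{S(\bm{y}^*)}$ coincides with $\Xi^{\bm{y}^*}(\bm{x})$, while its projection onto any unmaterialized $\xi_i$ is the interval $[\xi_i^{\min}, \xi_i^{\max}]$, so the effective dimensionality is $|S(\bm{y}^*)|$ as required. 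The active constraint block depends on $\bm{\xi}$ only through $\bm{\xi}_{S(\bm{y}^*)}$ by construction, and since the recourse argument $\bm{y}^* \circ \bm{\xi}$ zeros out the unmaterialized coordinates before entering $\bm{\tilde{x}}$, the DFS dependence and recourse dependence properties of Table \ref{tab:Classification} both hold. The epigraph objective $x_1$ is untouched by the encoding.

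The main obstacle I anticipate is the standard subtlety of big-M constructions: verifying that a single finite $M$ suffices simultaneously across all stacked rows and all $(\bm{x},\bm{\xi})$ in their bounded domains. This follows from boundedness of $\mathcal{X}$, the standing containment $\Xi(\bm{x},\bm{y}) \subseteq [\bm{\xi}^{\min}, \bm{\xi}^{\max}]$, and mild regularity of the prescribed coefficient functions, each of which is therefore bounded in magnitude on the compact domain. A secondary, purely bookkeeping point is the faithful absorption of constant terms that do not naturally fit into $\bm{U}\bm{x}$ or $\bm{b}(\bm{\xi})$; these are handled by appending an always-one auxiliary coordinate to $\bm{x}$, which leaves the structural form of \eqref{eqn:TwoStage_type2a}--\eqref{eqn:UncertaintySet_type2a} intact.
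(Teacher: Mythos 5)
Your proof is correct and follows the same high-level strategy as the paper's: enumerate the (finitely many) materialization scenarios, stack one block of uncertainty-set rows and one block of constraints per scenario, and deactivate all but the selected block with big-M terms, absorbing the resulting affine-in-$\bm{y}$ pieces into $\bm{\overline{U}}\bm{y}$ and $\bm{D}(\bm{\xi})\bm{y}$ and the constants into an always-one coordinate. Where you genuinely diverge is in the gadget that keys the activation to $\bm{y}$. The paper first writes the problem as a disjunction over scenarios, then introduces auxiliary scenario-indicator binaries $\bm{\omega}\in\{0,1\}^S$ with $\sum_s \omega^s = 1$ and linearized equivalences $\omega^s = 1 \Leftrightarrow \bm{y}=\bm{\bar y}^s$, and drives the big-M terms by $(1-\omega^s)$; it then appeals to a final ``redefinition of variables'' to land in the form of \eqref{eqn:TwoStage_type2a}. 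You instead use the Hamming-distance switch $h^{\bm{y}^*}(\bm{y})$, which is affine in $\bm{y}$ and vanishes exactly at $\bm{y}=\bm{y}^*$, so the deactivation terms slot directly into the $\bm{\overline{U}}\bm{y}$ and $\bm{D}\bm{y}$ slots with no auxiliary variables and no linking constraints. Your route is more direct and makes the claim ``this has the form \eqref{eqn:TwoStage_type2a}'' immediate rather than deferred to a redefinition step; the paper's route makes the underlying disjunctive structure more explicit, which is pedagogically useful and mirrors standard GDP-to-MILP reformulation. Both constructions share the same (unavoidable) exponential blow-up in the number of stacked rows and the same reliance on a finite big-M, which — as you correctly flag — rests on boundedness of $\mathcal{X}$, the box containment of $\Xi(\bm{x},\bm{y})$, and boundedness of the coefficient data; the paper is no more rigorous on this point (``vectors of sufficiently large parameters''). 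Your choice to leave unmaterialized coordinates free in the box rather than pin them to zero is also fine, since the paper itself notes that the value assigned to unmaterialized parameters is arbitrary once the active block and the recourse argument $\bm{y}\circ\bm{\xi}$ are independent of them.
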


\begin{proof}
We provide a constructive proof for Proposition \ref{prp:TwoStage_type2a}. In the following, we refer to a possible set of materialized uncertain parameters as a \textit{materialization scenario}. Since $\bm{y} \in \{0,1\}^K$ is defined such that each materialization scenario is given by a specific $\bm{y}$ with $y_i=1$ if and only if the uncertain parameter $\xi_i$ materializes, the set of materialization scenarios is finite (maximum $2^K$). Let $S$ be the number of materialization scenarios and $\mathcal{S} := \{1, \dots, S\}$. We define a set $\mathcal{Y} := \{\bm{\bar{y}}^1, \dots, \bm{\bar{y}}^S\}$ where $\bm{\bar{y}}^s$ is the specific $\bm{y}$-vector that represents scenario $s$. In the most general case, each materialization scenario is associated with a distinct set of constraints and a distinct uncertainty set involving only the materialized uncertain parameters. Hence, the two-stage problem can be formulated as follows:
\begin{equation}
\label{eqn:TwoStage_type2a_general}
\begin{aligned}
\minimize_{\bm{x}, \, \bm{y}, \, \bm{\tilde{x}}} \quad & x_1 \\
\st \quad & \bm{x} \in \mathcal{X}, \; \bm{y} \in \mathcal{Y} \\
& \bigvee_{s \in \mathcal{S}} \:
\left[
\begin{array}{c}
	\bm{y} = \bm{\bar{y}}^s \\[4pt]
\hspace{-4pt} \left.
\begin{array}{c}
  \bm{A}^s(\bm{\xi}^s) \bm{x} + \bm{D}^s(\bm{\xi}^s) \bm{\bar{y}}^s + \bm{\widetilde{A}}^s(\bm{\xi}^s) \bm{\tilde{x}}(\bm{\xi}^s) \leq \bm{b}^s(\bm{\xi}^s) \\[4pt]
  \bm{\tilde{x}}(\bm{\xi}^s) \in \widetilde{\mathcal{X}}
\end{array}
\right\rbrace \; \forall \, \bm{\xi}^s \in \Xi^s(\bm{x})
\end{array}
\right],
\end{aligned}
\end{equation}
where the disjunction indicates that only the constraints for the selected materialization scenario need to be satisfied. Here, $\bm{\xi}^s$ only contains the $\xi_i$ for which $\bar{y}^s_i=1$ and is therefore a $K^s$-dimensional vector with $K^s=\bm{e}^{\top}\bm{\bar{y}}^s$. The $K^s$-dimensional scenario-specific uncertainty set is 
\begin{equation*}
\Xi^s(\bm{x}) = \left\lbrace \bm{\xi}^s \in \mathbb{R}^{K^s}: \bm{W}^s \bm{\xi}^s \leq \bm{U}^s \bm{x} \right\rbrace \subseteq \left\lbrace \bm{\xi}^s \in \mathbb{R}^{K^s}: \xi^{\min}_i \leq \xi_i \leq \xi^{\max}_i \;\; \forall \, i \text{ for which } \bar{y}^s_i=1 \right\rbrace.
\end{equation*}

Similar to the constraints, a disjunction can also be used to define an overall uncertainty set $\Xi(\bm{x},\bm{y})$. By doing so, problem \eqref{eqn:TwoStage_type2a_general} can be reformulated as
\begin{equation}
\label{eqn:TwoStage_type2a_general2}
\begin{aligned}
\minimize_{\bm{x}, \, \bm{y}, \, \bm{\tilde{x}}} \quad & x_1 \\
\st \quad & \bm{x} \in \mathcal{X}, \; \bm{y} \in \mathcal{Y} \\
& \bigvee_{s \in \mathcal{S}} \:
\left[
\begin{array}{c}
	\bm{y} = \bm{\bar{y}}^s \\[4pt]
	\bm{A}^s(\bm{\xi}^s) \bm{x} + \bm{D}^s(\bm{\xi}^s) \bm{\bar{y}}^s + \bm{\widetilde{A}}^s(\bm{\xi}^s) \bm{\tilde{x}}(\bm{\bar{y}}^s \circ \bm{\xi}) \leq \bm{b}^s(\bm{\xi}^s) \\[4pt]
	\bm{\tilde{x}}(\bm{\bar{y}}^s \circ \bm{\xi}) \in \widetilde{\mathcal{X}}
\end{array}
\right] \quad \forall \, \bm{\xi} \in \Xi(\bm{x},\bm{y})
\end{aligned}
\end{equation}
with
\begin{equation*}
\Xi(\bm{x},\bm{y}) = \left\lbrace
\bm{\xi} \in \mathbb{R}^K:
\bigvee_{s \in \mathcal{S}} \:
\left[
\begin{array}{c}
	\bm{y} = \bm{\bar{y}}^s \\[4pt]
	\bm{W}^s \bm{\xi}^s \leq \bm{U}^s \bm{x} \\[4pt]
	\xi_i = 0 \;\; \forall \, i \text{ for which } \bar{y}^s_i=0
\end{array}
\right]
\right\rbrace.
\end{equation*}
Note that in $\Xi(\bm{x},\bm{y})$, we set unmaterialized uncertain parameters to zero; this is a convenient but arbitrary choice since for a given materialization scenario $s$, only the projection of $\Xi(\bm{x},\bm{y})$ onto the $\bm{\xi}^s$-space matters.

With additional binary variables $\bm{\omega} \in \{0,1\}^S$, we can reformulate the disjunctions and arrive at the following formulation:
\begin{equation}
\label{eqn:TwoStage_type2a_general3}
\begin{aligned}
\minimize_{\bm{x}, \, \bm{\omega}, \, \bm{y}, \, \bm{\tilde{x}}} \quad & x_1 \\
\st \quad & \bm{x} \in \mathcal{X}, \; \bm{\omega} \in \{0,1\}^S, \; \bm{y} \in \mathcal{Y}  \\
& \sum_{s \in \mathcal{S}} \omega^s = 1, \quad \omega^s = 1 \Leftrightarrow \bm{y} = \bm{\bar{y}}^s \quad \forall \, s \in \mathcal{S} \\
& \bm{A}^s(\bm{\xi}^s) \bm{x} + \bm{D}^s(\bm{\xi}^s) \bm{y} + \bm{\widetilde{A}}^s(\bm{\xi}^s) \bm{\tilde{x}}(\bm{y} \circ \bm{\xi}) \leq \bm{b}^s(\bm{\xi}^s) + M^s \bm{e} (1-\omega^s) \quad \forall \, s \in \mathcal{S}, \, \bm{\xi} \in \Xi(\bm{x},\bm{\omega},\bm{y}) \\
& \bm{\tilde{x}}(\bm{y} \circ \bm{\xi}) \in \widetilde{\mathcal{X}} \quad \forall \, \bm{\xi} \in \Xi(\bm{x},\bm{\omega},\bm{y})
\end{aligned}
\end{equation}
with
\begin{equation*}
\Xi(\bm{x},\bm{\omega},\bm{y}) = \left\lbrace
\bm{\xi} \in \mathbb{R}^K:
\begin{array}{l}
	\bm{W}^s \bm{\xi}^s \leq \bm{U}^s \bm{x} + \widehat{M}^s \bm{e} (1-\omega^s) \quad \forall \, s \in \mathcal{S} \\[4pt]
	\bm{\xi}^{\min} \circ \bm{y} \leq \bm{\xi} \leq \bm{\xi}^{\max} \circ \bm{y}
\end{array}
\right\rbrace,
\end{equation*}
where $M^s$ and $\widehat{M}^s$ are vectors of sufficiently large parameters. As the statement $\omega^s = 1 \Leftrightarrow \bm{y} = \bm{\bar{y}}^s$ can be expressed as as set of linear constraints, e.g. $\bm{\bar{y}}^s \omega^s \leq \bm{y} \leq \bm{e} + (\bm{\bar{y}}^s-\bm{e}) \omega^s$, formulation \eqref{eqn:TwoStage_type2a_general3} can be easily transformed into the form of \eqref{eqn:TwoStage_type2a} by appropriate redefinition of variables, constraint matrices, and right-hand sides.
\end{proof}

\subsection{Incorporating Type-2b Endogenous Uncertainty}

Finally, we introduce additional binary variables $\bm{z} \in \{0,1\}^K$ to indicate which uncertain parameters are observed, and we arrive at the following unified two-stage robust optimization formulation that considers type-1, type-2a, and type-2b endogenous uncertainty:
\begin{equation}
\label{eqn:TwoStage_type2b}
\begin{aligned}
\minimize_{\bm{x}, \, \bm{y}, \, \bm{z}, \, \bm{\tilde{x}}} \quad & x_1 \\
\st \quad & \bm{x} \in \mathcal{X}, \; \bm{y} \in \mathcal{Y}, \; \bm{z} \in \mathcal{Z}, \; \bm{z} \leq \bm{y} \\
& \hspace{-8pt} \left.
\begin{array}{l}
  \bm{A}(\bm{\xi}) \bm{x} + \bm{D}(\bm{\xi}) \bm{y} + \bm{H}(\bm{\xi}) \bm{z} + \bm{\widetilde{A}}(\bm{\xi}) \bm{\tilde{x}}(\bm{z} \circ \bm{\xi}) \leq \bm{b}(\bm{\xi}) \\[4pt]
  \bm{\tilde{x}}(\bm{z} \circ \bm{\xi}) \in \widetilde{\mathcal{X}}
\end{array}
\right\rbrace \quad \forall \, \bm{\xi} \in \Xi(\bm{x},\bm{y}),
\end{aligned}
\end{equation}
where $\mathcal{Z} \subseteq \{0,1\}^K$. The recourse decisions $\bm{\tilde{x}}(\bm{z} \circ \bm{\xi})$ can only be functions of the observed parameters. Also, by requiring $\bm{z} \leq \bm{y}$, parameters can only be observed if they materialize. The formulation of the uncertainty set does not change from \eqref{eqn:UncertaintySet_type2a}:
\begin{equation*}
\Xi(\bm{x},\bm{y}) = \left\lbrace \bm{\xi} \in \mathbb{R}^K: \bm{W} \bm{\xi} \leq \bm{U} \bm{x} + \bm{\overline{U}} \bm{y} \right\rbrace.
\end{equation*}

\section{Multistage Robust Optimization with Endogenous Uncertainty}
\label{sec:Multistage}

In multistage robust optimization with endogenous uncertainty, decisions that affect the uncertainty can also be recourse variables. Moreover, while the two-stage problem can only consider decisions determining \textit{if} uncertain parameters materialize or are observed, multistage formulations can incorporate decisions affecting \textit{when}, i.e. in which stage, uncertain parameters materialize or are observed.

Let $\mathcal{T} = \{1,\dots,T\}$ denote the set of stages and $\mathcal{I}$ the set of uncertain parameters. An uncertain parameter $i \in \mathcal{I}$ can materialize anytime between (and including) given stages $\tau_i$ and $\bar{\tau}_i$. We define a vector of uncertain parameters $\bm{\xi}_t \in \mathbb{R}^{K_t}$ that contains all uncertain parameters $i$ for which $\tau_i=t$. We further define all uncertain parameters that can materialize in or prior to stage $t$ as $\bm{\xi}_{[t]} = (\bm{\xi}_1, \dots, \bm{\xi}_t)$, and $\bm{\xi} = \bm{\xi}_{[T]}$. The set of uncertain parameters for which $\tau_i=t$ is denoted by $\widetilde{\mathcal{I}}_t$, and we have $\widetilde{\mathcal{I}}_t \cap \widetilde{\mathcal{I}}_{t'} = \emptyset$ for all $t \neq t'$ and $\mathcal{I} = \bigcup_{t \in \mathcal{T}} \widetilde{\mathcal{I}}_t$. The set of uncertain parameters that can materialize in stage $t$ is denoted by $\bar{\mathcal{I}}_t$. In the special case where every uncertain parameter can only materialize in one stage, $\bar{\mathcal{I}}_t = \widetilde{\mathcal{I}}_t$. Similarly, let $\widehat{\mathcal{I}}_t$ denote the set of uncertain parameters that can be observed in stage $t$, and $\hat{\tau}_i$ and $\tilde{\tau}_i$ be the stages between which uncertain parameter $i$ can be observed. Since the observation of an uncertain parameter can only occur after its materialization, we have $\hat{\tau}_i \geq \tau_i$ and $\tilde{\tau}_i \geq \bar{\tau}_i$.

The general multistage robust optimization problem with type-1, type-2a, and type-2b endogenous uncertainty can be formulated as follows:
\begin{equation}
\label{eqn:Multistage}
\begin{aligned}
\minimize_{\bm{x}, \, \bm{y}, \, \bm{z}} \quad & x_{11} \\
\begin{array}{l}
  \st \; \vphantom{\sum\limits_{t'=1}^t} \\ [10pt]
  \vphantom{\bm{x}_t(\bm{\xi}_{[t]}) \in \mathcal{X}_t} \\ [4pt]
  \vphantom{\bm{y}_t(\bm{\xi}_{[t]}) \in \mathcal{Y}_t} \\ [4pt]
  \vphantom{\bm{z}_t(\bm{\xi}_{[t]}) \in \mathcal{Z}_t}
\end{array}
  & \hspace{-8pt} \left.
\begin{array}{l}
  \sum\limits_{t'=1}^t \left[ \bm{A}_{tt'}(\bm{\xi}_{[t]}) \bm{x}_{t'} + \bm{D}_{tt'}(\bm{\xi}_{[t]}) \bm{y}_{t'} + \bm{H}_{tt'}(\bm{\xi}_{[t]}) \bm{z}_{t'} \right] \leq \bm{b}_t(\bm{\xi}_{[t]}) \\ [10pt]
  \bm{x}_t = \bm{x}_t\left(\bm{\bar{z}}_{t-1} \circ \bm{\xi}_{[t]}\right), \; \bm{x}_t \in \mathcal{X}_t \\ [4pt]
  \bm{y}_t = \bm{y}_t\left(\bm{\bar{z}}_{t-1} \circ \bm{\xi}_{[t]}\right), \; \bm{y}_t \in \mathcal{Y}_t \\ [4pt]
  \bm{z}_t = \bm{z}_t\left(\bm{\bar{z}}_{t-1} \circ \bm{\xi}_{[t]}\right), \; \bm{z}_t \in \mathcal{Z}_t
\end{array}
\right\rbrace \\
  & \quad \quad \quad \quad \quad \quad \quad \quad \quad \quad \quad \quad \quad \quad \quad \quad \quad \quad \forall \, t \in \mathcal{T}, \, \bm{\xi}_{[t]} \in \Xi_t(\bm{x}_{[t-1]},\bm{y}_{[t-1]},\bm{z}_{[t-2]})
\end{aligned}
\end{equation}
where $\mathcal{Y}_t \in \{0,1\}^{|\bar{\mathcal{I}}_{t+1}|}$ and $\mathcal{Z}_t \in \{0,1\}^{|\widehat{\mathcal{I}}_{t+1}|}$. The notation is such that $\bm{x}_{[t]} = (\bm{x}_1,\dots,\bm{x}_t)$, $\bm{x} = \bm{x}_{[T]}$, $\bm{y}_{[t]} = (\bm{y}_1,\dots,\bm{y}_t)$, $\bm{y} = \bm{y}_{[T]}$, $\bm{z}_{[t]} = (\bm{z}_1,\dots,\bm{z}_t)$, and $\bm{z} = \bm{z}_{[T]}$. The binary variable $y_{ti}$ equals 1 if uncertain parameter $i$ materializes in stage $t+1$, whereas the binary variable $z_{ti}$ equals 1 if uncertain parameter $i$ is observed in stage $t+1$. We use auxiliary variables $\bm{\bar{z}}_{t-1} = \left(\sum_{t'=0}^{t-1} z_{t'1}, \dots, \sum_{t'=0}^{t-1} z_{t'K_{[t]}}\right)$ with $K_{[t]} = \sum_{t'=1}^t K_{t'}$ to indicate which uncertain parameters in $\bm{\xi}_{[t]}$ are observed up to stage $t$. The recourse variables in stage $t$, i.e. $\bm{x}_t$, $\bm{y}_t$, and $\bm{z}_t$, are functions of $\bm{\bar{z}}_{t-1} \circ \bm{\xi}_{[t]}$, which represent the uncertain parameters observed up to stage $t$. For notational convenience, we assume that $\bm{\xi}_1 = \xi_{11} = 1$, $\bm{y}_0 = y_{01} = 1$, and $\bm{z}_0 = z_{01} = 1$. The stage-specific uncertainty set $\Xi_t$ is defined such that $\Xi_1 = \{\bm{\xi}_1 \in \mathbb{R}: \xi_{11} = 1\}$, and for $t \geq 2$:
\begin{equation}
\label{eqn:MultistageUncertaintySet}
\Xi_t(\bm{x}_{[t-1]},\bm{y}_{[t-1]},\bm{z}_{[t-2]}) = \left\lbrace
\bm{\xi}_{[t]} \in \mathbb{R}^{K_{[t]}}:
\begin{array}{l}
  \bm{W}_t \bm{\xi}_{[t]} \leq \sum\limits_{t'=1}^{t-1} \left[ \bm{U}_{tt'} \bm{x}_{t'} + \bm{\overline{U}}_{tt'} \bm{y}_{t'} \right] \\ [8pt]
  \bm{x}_{t'} = \bm{x}_{t'}\left(\bm{\bar{z}}_{t-1} \circ \bm{\xi}_{[t']}\right) \quad \forall \, t'=1,\dots,t-1 \\ [4pt]
  \bm{y}_{t'} = \bm{y}_{t'}\left(\bm{\bar{z}}_{t-1} \circ \bm{\xi}_{[t']}\right) \quad \forall \, t'=1,\dots,t-1 \\ [4pt]
  \bm{z}_{t'} = \bm{z}_{t'}\left(\bm{\bar{z}}_{t-1} \circ \bm{\xi}_{[t']}\right) \quad \forall \, t'=1,\dots,t-1
\end{array}
\right\rbrace,
\end{equation}
where the polyhedral set directly depends on $\bm{x}_{[t-1]}$ and $\bm{y}_{[t-1]}$, which in turn are functions of observed uncertain parameters given by $\bm{z}_{[t-2]}$. We assume that $\Xi_t(\bm{x}_{[t-1]},\bm{y}_{[t-1]},\bm{z}_{[t-2]}) \subseteq \{\bm{\xi}_{[t]} \in \mathbb{R}^{K_{[t]}}: \bm{\xi}^{\min}_{[t]} \leq \bm{\xi}_{[t]} \leq \bm{\xi}^{\max}_{[t]}\}$ for all feasible $(\bm{x}_{[t-1]},\bm{y}_{[t-1]},\bm{z}_{[t-2]})$.

The linear constraints in problem \eqref{eqn:Multistage} include the following inequalities:
\begin{subequations}
\begin{align}
  & \sum_{t' = \tau_i}^{\bar{\tau}_i} y_{t'-1,i} \leq 1 \quad \forall \, i \in \mathcal{I} \label{eqn:Materialize} \\
  & \sum_{t' = \hat{\tau}_i}^{\tilde{\tau}_i} z_{t'-1,i} \leq 1 \quad \forall \, i \in \mathcal{I} \label{eqn:Observe} \\
  & \sum_{t' = \tau_i}^{\min \{t, \bar{\tau}_i\}} y_{t'-1,i} \geq z_{t-1,i} \quad \forall \, t \in \mathcal{T} \setminus \{1\}, \, i \in \widehat{\mathcal{I}}_t. \label{eqn:MaterializeThenObserve}
\end{align}
\end{subequations}
Constraints \eqref{eqn:Materialize} state that each uncertain parameter $i$ can only materialize between stages $\tau_i$ and $\bar{\tau}_i$; analogously, uncertain parameter $i$ can only be observed between stages $\hat{\tau}_i$ and $\tilde{\tau}_i$ according to \eqref{eqn:Observe}. Inequalities \eqref{eqn:MaterializeThenObserve} ensure that an uncertain parameter can only be observed after it has materialized.

\section{Decision Rule Approach}
\label{sec:DecisionRules}

In this section, we present a decision rule approach to (approximately) solve the general multistage robust optimization problem under endogenous uncertainty, which reduces to the two-stage case when $T=2$. The proposed approach is an extension of the method presented in \citet{Feng2020} and relies on the concept of lifted uncertainty \citep{Georghiou2015a}. Here, the main innovation is the use of auxiliary uncertain parameters to model decision-dependent nonanticipativity.

For ease of exposition and for tractability reasons, we consider the case of fixed recourse, i.e. $\bm{A}_{tt'}$, $\bm{D}_{tt'}$, and $\bm{H}_{tt'}$ do not depend on $\bm{\xi}_{[t]}$. In theory, the presented approach can be adapted to consider random recourse \citep{Bertsimas2018}, however, at considerable additional computational cost. We further assume that $\bm{b}_t(\bm{\xi}_{[t]}) = \bm{B}_t \bm{\xi}_{[t]}$ and $\mathcal{X}_t = \mathbb{R}^{\bar{N}_t} \times \{0,1\}^{\widehat{N}_t}$. Note that $\bm{x}_t$ contain both continuous and binary variables, and we define $\bm{x}_t = (\bm{x}^{\mathrm{c}}_t, \bm{x}^{\mathrm{b}}_t)$ such that $\bm{x}^{\mathrm{c}}_t \in \mathbb{R}^{\bar{N}_t}$ and $\bm{x}^{\mathrm{b}}_t \in \{0,1\}^{\widehat{N}_t}$. 

\subsection{Decision-Dependent Nonanticipativity}
\label{sec:Nonanticipativity}

In problem \eqref{eqn:Multistage}, recourse variables can only be adjusted based on the values of observed uncertain parameters; this is referred to as nonanticipativity. As a result, any decision rule applied to a recourse variable can only be a function of uncertain parameters that have been observed up to the corresponding stage. Suppose we apply decision rules with a linear structure such that, for example, $\bm{x}_t = \bm{\widetilde{X}}_{t} \bm{\xi}_{[t]}$. Then, intuitively, one may attempt to model nonanticipavity by incorporating additional constraints that force decision rule parameters corresponding to unobserved uncertain parameters to be zero, e.g. $-M \bar{z}_{t-1,i} \bm{e} \leq \bm{\tilde{x}}_{ti} \leq M \bar{z}_{t-1,i} \bm{e}$ with $M$ being a sufficiently large parameter. In the two-stage case, where $\bm{z}$ are only first-stage variables, this is a viable approach. However, in a multistage setting, $\bm{z}_{[t-1]}$ can themselves be adjustable recourse variables; in that case, the added constraints also require $\bm{\widetilde{X}}_{t}$ to be adjustable. Simply applying another decision rule to $\bm{\widetilde{X}}_{t}$ does not resolve the issue since the decision rule parameters for $\bm{\widetilde{X}}_{t}$ would also have to be adjustable because of their dependence on $\bm{z}_{[t-1]}$. For fixed uncertainty sets, \citet{Vayanos2011} propose an approach that divides the uncertainty set into a number of preselected subsets $\mathcal{S}$ and introduces recourse variables for each subset $s \in \mathcal{S}$. As a result, constant $\bm{z}_s$ can be selected for each subset $s$, which allows restrictions to be imposed on the decision rule parameters for the remaining recourse variables such that nonanticipativity is satisfied.  However, this approach is severely computationally intractable if the number of uncertainty subsets is large, which is only exacerbated in the case of problem \eqref{eqn:Multistage} where the uncertainty set is generally not fixed.

We propose an alternative, more tractable approach to incorporate decision-dependent nonanticipativity. Notice that given a linear decision rule structure, the dependence of a recourse variable on an uncertain parameter is equally eliminated if, instead of setting the corresponding decision rule coefficients to zero, the uncertain parameter itself is set to zero. In the special case where an uncertain parameter can only materialize in one stage and its materialization directly leads to its observation (i.e. $\bm{y} = \bm{z}$), this can be easily incorporated by including the following inequalities in the definition of the uncertainty set $\Xi_t$:
\begin{equation}
\label{eqn:NACineq}
\bm{\xi}^{\min}_{t'} \circ \bm{y}_{t'-1} \leq \bm{\xi}_{t'} \leq \bm{\xi}^{\max}_{t'} \circ \bm{y}_{t'-1} \quad \forall \, t'=1,\dots,t,
\end{equation}
which are valid since, as noted in the proof of Proposition \ref{prp:TwoStage_type2a}, the values of unmaterialized parameters can be set to zero without loss of generality. However, inequalities \eqref{eqn:NACineq} cannot capture the general case where there may be multiple stages in which an uncertain parameter can materialize or where despite being already materialized, one can still decide whether or not to observe the true value of the uncertain parameter in a given stage.

To address the general case, we first generalize \eqref{eqn:NACineq} such that uncertain parameters that can materialize in multiple stages are considered:
\begin{equation}
\label{eqn:NACineqGeneral}
\xi^{\min}_{\tau_i i} \sum_{t' = \tau_i}^{\min \{t, \bar{\tau}_i\}} y_{t'-1,i} \leq \xi_{\tau_i i} \leq \xi^{\max}_{\tau_i i} \sum_{t' = \tau_i}^{\min \{t, \bar{\tau}_i\}} y_{t'-1,i} \quad \forall \, i \in \bigcup_{t'=1}^t \widetilde{\mathcal{I}}_{t'}.
\end{equation}
We then introduce $\tilde{\tau}_i-\hat{\tau}_i+1$ auxiliary uncertain parameters $\zeta_{\hat{\tau}_i i}, \dots, \zeta_{\tilde{\tau}_i i}$ for every $i \in \mathcal{I}$. Note that for an original uncertain parameter $i$ for which $\tau_i = \tilde{\tau}_i$ and $y_{\tau_i-1,i} = z_{\tau_i-1,i}$ (which corresponds to the above-mentioned special case), we do not actually require auxiliary uncertain parameters; however, for ease of notation, we assume that we have auxiliary uncertain parameters associated with every $i \in \mathcal{I}$. We incorporate the following inequalities in the definition of the uncertainty set for stage $t$:
\begin{subequations}
\label{eqn:NACineqGeneral2}
\begin{align}
  & \xi_{\tau_i i} - \xi^{\max}_{\tau_i i} (1-z_{t'-1,i}) \leq \zeta_{t'i} \leq \xi_{\tau_i i} + M (1-z_{t'-1,i}) \quad \forall \, t'=1,\dots,t, \, i \in \widehat{\mathcal{I}}_{t'} \\
  & \xi^{\min}_{\tau_i i} z_{t'-1,i} \leq \zeta_{t'i} \leq \xi^{\max}_{\tau_i i} z_{t'-1,i} \quad \forall \, t'=1,\dots,t, \, i \in \widehat{\mathcal{I}}_{t'},
\end{align}
\end{subequations}
where $M$ is a sufficiently large big-M parameter. Inequalities \eqref{eqn:NACineqGeneral2} enforce that $\zeta_{t'i}$ takes the value of $\xi_{\tau_i i}$ if $\xi_{\tau_i i}$ is observed in stage $t'$, and is zero otherwise.

With a slight abuse of notation, we define the following new uncertainty set that considers the original and the auxiliary uncertain parameters:
\begin{equation*}
\begin{aligned}
  & \Theta_t(\bm{x}_{[t-1]},\bm{y}_{[t-1]},\bm{z}_{[t-1]}) = \left\lbrace
(\bm{\xi}_{[t]}, \bm{\zeta}_{[t]}) \in \mathbb{R}^{\sum_{t'=1}^t K_{t'}+|\widehat{\mathcal{I}}_{t'}|}: \sum_{t'=1}^t \left[ \bm{W}_{tt'} \bm{\xi}_{t'} + \bm{\overline{W}}_{tt'} \bm{\zeta}_{t'} \right] \right. \\
  & \quad \quad \quad \quad \quad \quad \quad \quad \quad \quad \quad \quad \leq \left. \sum\limits_{t'=1}^{t-1} \left[ \bm{U}^{\mathrm{c}}_{tt'} \bm{x}^{\mathrm{c}}_{t'}(\bm{\zeta}_{[t']}) + \bm{U}^{\mathrm{b}}_{tt'} \bm{x}^{\mathrm{b}}_{t'}(\bm{\zeta}_{[t']}) + \bm{\overline{U}}_{tt'} \bm{y}_{t'}(\bm{\zeta}_{[t']}) + \bm{\widehat{U}}_{tt'} \bm{z}_{t'}(\bm{\zeta}_{[t']}) \right] \right\rbrace,
\end{aligned}
\end{equation*}
where $\bm{W}_{tt'} \in \mathbb{R}^{Q_t \times K_{t'}}$, $\bm{\overline{W}}_{tt'} \in \mathbb{R}^{Q_t \times |\widehat{\mathcal{I}}_{t'}|}$, $\bm{U}^{\mathrm{c}}_{tt'} \in \mathbb{R}^{Q_t \times \bar{N}_{t'}}$, $\bm{U}^{\mathrm{b}}_{tt'} \in \mathbb{R}^{Q_t \times \widehat{N}_{t'}}$, $\bm{\overline{U}}_{tt'} \in \mathbb{R}^{Q_t \times |\bar{\mathcal{I}}_{t'+1}|}$, and $\bm{\widehat{U}}_{tt'} \in \mathbb{R}^{Q_t \times |\widehat{\mathcal{I}}_{t'+1}|}$ are chosen to incorporate the inequalities from the original uncertainty set \eqref{eqn:MultistageUncertaintySet} as well as inequalities \eqref{eqn:NACineqGeneral} and \eqref{eqn:NACineqGeneral2}. Notice that the recourse variables $\bm{x}^{\mathrm{c}}_t$, $\bm{x}^{\mathrm{b}}_t$, $\bm{y}_t$, and $\bm{z}_t$ now depend on $\bm{\zeta}_{[t]}$, which represent the uncertain parameters observed up to stage $t$.

\subsection{Lifted Uncertainty}

For each auxiliary uncertain parameter $\zeta_{ti}$, we introduce two sets of new uncertain parameters and hence create a higher-dimensional \textit{lifted} uncertainty space, which will facilitate the design of flexible decision rules. To this end, the bounded marginal support of $\zeta_{ti}$, which is the same as the one of $\xi_{\tau_i i}$, is partitioned into $r_i$ pieces defined by its lower and upper bounds as well as $r_i-1$ breakpoints, and we denote these points by $p_i^j$ with $j=0,1,\dots,r_i$ such that
\begin{equation*}
\xi_{\tau_i i}^{\min} = p_i^0 < p_i^1 < \cdots < p_i^{r_i - 1} < p_i^{r_i} = \xi_{\tau_i i}^{\max}.
\end{equation*}
We then define two lifting operators, $\bar{L}_t: \mathbb{R}^{|\widehat{\mathcal{I}}_t|} \mapsto \mathbb{R}^{\overline{K}_t}$ and $\widehat{L}_t: \mathbb{R}^{|\widehat{\mathcal{I}}_t|} \mapsto \{0,1\}^{\widehat{K}_t}$, for each $t \in \mathcal{T}$. The first lifting operator $\bar{L}_t(\bm{\zeta}_t)$ maps $\bm{\zeta}_t$ onto a $\overline{K}_t$-dimensional space with $\overline{K}_t = \sum_{i \in \widehat{\mathcal{I}}_t} r_i$ such that each lifted parameter is a piecewise linear function of the corresponding $\zeta_{ti}$ and is defined as follows:
\begin{equation*}
    \bar{L}_{ti}^j(\bm{\zeta}_t) := 
    \left \{
        \begin{array}{lcl}
         {\zeta_{ti}}  & \text{if} & r_i = 1 \\
         {\inf \left \{ \zeta_{ti}, p_i^j \right\} } & \text{if} & r_i > 1, \, j = 1 \\
         {\sup \left \{ \inf \left \{ \zeta_{ti}, p_i^j \right \} - p_i^{j-1}, 0 \right \}} & \text{if} & r_i > 1, \, j = 2, \ldots, r_i.
        \end{array}  
    \right.
\end{equation*}
By construction, we have $\zeta_{ti} = \sum_{j=1}^{r_i} \bar{L}_{ti}^j(\bm{\zeta}_t)$. The second lifting operator $\widehat{L}_t(\bm{\zeta}_t)$ maps $\bm{\zeta}_t$ onto a $\widehat{K}_t$-dimensional space with $\widehat{K}_t = \sum_{i \in \widehat{\mathcal{I}}_t} g_i$ and $g_i = \max\{ 1, \, r_i - 1 \}$. It is defined such that 
\begin{equation*}
    \widehat{L}_{ti}^j(\bm{\zeta}_t) := 
    \left \{
        \begin{array}{lcl}
         1  & \text{if} & r_i = 1 \\
         {\mathbbm{1}(\zeta_{ti} \geq p_i^j)} & \text{if} & r_i > 1, \, j = 1, \ldots, g_i,
        \end{array}
    \right .
\end{equation*}
which is a piecewise constant function of $\zeta_{ti}$.

Using the lifting operators, we can define the following lifted endogenous uncertainty set:
\begin{equation*}
\begin{aligned}
  & \widetilde{\Theta}'_t(\bm{x}_{[t-1]},\bm{y}_{[t-1]},\bm{z}_{[t-1]}) = \left\lbrace
(\bm{\xi}_{[t]}, \bm{\zeta}_{[t]}, \bm{\bar{\zeta}}_{[t]}, \bm{\hat{\zeta}}_{[t]}) \in \mathbb{R}^{\widetilde{K}_{[t]}}: \vphantom{\begin{array}{l} \sum\limits_{t'=1}^t \\ [8pt] \bm{\bar{\zeta}}_{t'} \end{array}} \right. \\
  & \quad \left.
\begin{array}{l}
  \sum\limits_{t'=1}^t \left[ \bm{W}_{tt'} \bm{\xi}_{t'} + \bm{\overline{W}}_{tt'} \bm{\zeta}_{t'} \right] \leq \sum\limits_{t'=1}^{t-1} \left[ \bm{U}^{\mathrm{c}}_{tt'} \bm{x}^{\mathrm{c}}_{t'}(\bm{\zeta}_{[t']}) + \bm{U}^{\mathrm{b}}_{tt'} \bm{x}^{\mathrm{b}}_{t'}(\bm{\zeta}_{[t']}) + \bm{\overline{U}}_{tt'} \bm{y}_{t'}(\bm{\zeta}_{[t']}) + \bm{\widehat{U}}_{tt'} \bm{z}_{t'}(\bm{\zeta}_{[t']}) \right] \\ [8pt]
  \bm{\bar{\zeta}}_{t'} = \bar{L}_{t'}\left(\bm{\zeta}_{t'} \right), \; \bm{\hat{\zeta}}_{t'}= \widehat{L}_{t'} \left( \bm{\zeta}_{t'} \right) \quad \forall \, t'=1,\dots,t
\end{array}
\right\rbrace,
\end{aligned}
\end{equation*}
where $\widetilde{K}_{[t]} = \sum_{t'=1}^t K_{t'}+|\widehat{\mathcal{I}}_{t'}|+\overline{K}_{t'}+\widehat{K}_{t'}$, and for all $i \in \widehat{\mathcal{I}}_t$, $\bm{\bar{\zeta}}_{ti} = (\bar{\zeta}_{ti}^1, \ldots, \bar{\zeta}_{ti}^{r_i})$, $ \bar{L}_{ti} = (\bar{L}_{ti}^1, \ldots, \bar{L}_{ti}^{r_i})$, $\bm{\hat{\zeta}}_{ti} = (\hat{\zeta}_{ti}^1, \ldots, \hat{\zeta}_{ti}^{g_i})$, and $\widehat{L}_{ti} = (\widehat{L}_{ti}^1, \ldots, \widehat{L}_{ti}^{g_i})$. The main purpose of defining such a lifted uncertainty set is to enable the construction of linear decision rules in the space of the lifted uncertain parameters. Notice that $\widetilde{\Theta}'_t$ is an open set due to the discontinuity in $\widehat{L}$. Now consider problem \eqref{eqn:Multistage} with $\Xi_t$ replaced by $\widetilde{\Theta}'_t$ and the recourse variables restricted to be linear functions of $\bm{\bar{\zeta}}$ and $\bm{\hat{\zeta}}$, and a second problem that is the same except that it considers the convex hull of the closure of $\widetilde{\Theta}'_t$, i.e. $\mathrm{conv}(\mathrm{cl}(\widetilde{\Theta}'_t))$, instead of $\widetilde{\Theta}'_t$. Following the same arguments as in \citet{Bertsimas2018}, one can show that these two problems are equivalent in a sense that they have the same optimal value and that there is a one-to-one mapping between their feasible and optimal solutions.

It is difficult to obtain a closed-form description of $\mathrm{conv}(\mathrm{cl}(\widetilde{\Theta}'_t))$ as it requires finding all vertices of $\mathrm{cl}(\widetilde{\Theta}'_t)$, which are decision-dependent. Moreover, the number of vertices grows exponentially with the dimensionality of $\mathrm{cl}(\widetilde{\Theta}'_t)$ such that including them would render most problems of practical relevance computationally intractable. Therefore, instead of $\mathrm{conv}(\mathrm{cl}(\widetilde{\Theta}'_t))$, we consider an outer approximation, which we construct as follows. For each $t \in \mathcal{T}$, $i \in \widehat{\mathcal{I}}_t$, and $j = 1,\dots,r_i$, we define a set of vertices $\widetilde{\mathcal{V}}_{ti}^j$ in the lifted space:
\begin{equation*}
\widetilde{\mathcal{V}}_{ti}^j := \left\lbrace
\bm{\tilde{v}} =  \left (v, \, \bm{\bar{v}}, \, \bm{\hat{v}} \right): 
\begin{array}{l}
  v \in \{p_i^{j-1}, p_i^j\} \\
  \bm{\bar{v}} = \bar{L}_{ti} (\bm{e}_{ti} v) \\ 
  \bm{\hat{v}} = \lim\limits_{\zeta_{ti} \rightarrow v, \, \zeta_{ti} \in [p_i^{j-1}, p_i^j]} \widehat{L}_{ti} (\bm{e}_{ti} \zeta_{ti}) 
\end{array}
\right\rbrace,
\end{equation*}
which allows the following convex hull formulation of the closure of the marginal support of $\bm{\tilde{\zeta}}_{ti}$:
\begin{equation*}
   Z_{ti} = \left \{
   \bm{\tilde{\zeta}}_{ti} = (\zeta_{ti}, \bm{\bar{\zeta}}_{ti}, \bm{\hat{\zeta}}_{ti}):
        \begin{array}{l}
             \sum\limits_{\bm{\tilde{v}} \in \widetilde{\mathcal{V}}_{ti}} \lambda(\bm{\tilde{v}}) = 1 \\
             \zeta_{ti} = \sum\limits_{\bm{\tilde{v}} \in \widetilde{\mathcal{V}}_{ti}} \lambda(\bm{\tilde{v}}) v \\
             \bm{\bar{\zeta}}_{ti} = \sum\limits_{\bm{\tilde{v}} \in \widetilde{\mathcal{V}}_{ti}} \lambda(\bm{\tilde{v}}) \bm{\bar{v}} \\
             \bm{\hat{\zeta}}_{ti} = \sum\limits_{\bm{\tilde{v}} \in \widetilde{\mathcal{V}}_{ti}} \lambda(\bm{\tilde{v}}) \bm{\hat{v}}  \\
             \lambda(\bm{\tilde{v}}) \in \mathbb{R}_+ \quad \forall \, \bm{\tilde{v}} \in \widetilde{\mathcal{V}}_{ti}
            \end{array}
            \right \},
\end{equation*}
where $\widetilde{\mathcal{V}}_{ti} = \bigcup_{j=1}^{r_i} \widetilde{\mathcal{V}}_{ti}^j$ and $\lambda(\bm{\tilde{v}})$ denotes the coefficient associated with a particular vertex $\bm{\tilde{v}} \in \widetilde{\mathcal{V}}_{ti}$. We can now obtain the following closed-form expression of an outer approximation of $\mathrm{conv}(\mathrm{cl}(\widetilde{\Xi}'_t))$:
\begin{equation}
\label{eqn:OuterApproximation}
\begin{aligned}
  & \left\lbrace \{\bm{\xi}_{[t]}\} \times\prod_{t'=1}^t \prod_{i \in \widehat{\mathcal{I}}_{t'}} Z_{t'i} \right\rbrace \bigcap 
\left\lbrace (\bm{\xi}_{[t]}, \bm{\zeta}_{[t]}, \bm{\bar{\zeta}}_{[t]}, \bm{\hat{\zeta}}_{[t]}) \in \mathbb{R}^{\widetilde{K}_{[t]}}: \sum\limits_{t'=1}^t \left[ \bm{W}_{tt'} \bm{\xi}_{t'} + \bm{\overline{W}}_{tt'} \bm{\zeta}_{t'} \right] \right. \\
  & \quad \quad \quad \quad \quad \quad \quad \quad \quad \quad \leq \left. \sum\limits_{t'=1}^{t-1} \left[ \bm{U}^{\mathrm{c}}_{tt'} \bm{x}^{\mathrm{c}}_{t'}(\bm{\zeta}_{[t']}) + \bm{U}^{\mathrm{b}}_{tt'} \bm{x}^{\mathrm{b}}_{t'}(\bm{\zeta}_{[t']}) + \bm{\overline{U}}_{tt'} \bm{y}_{t'}(\bm{\zeta}_{[t']}) + \bm{\widehat{U}}_{tt'} \bm{z}_{t'}(\bm{\zeta}_{[t']}) \right] \right\rbrace.
\end{aligned}
\end{equation}

\subsection{Decision Rule Approximation}

Following a similar approach as in Feng et al. (2020), we apply decision rules that are linear functions of the lifted uncertain parameters and parameterized as follows:
\begin{subequations}
\label{eqn:DecisionRules}
\begin{align}
  & \bm{x}^{\mathrm{c}}_{t} = \sum_{t'=1}^t \sum_{i \in \widehat{\mathcal{I}}_{t'}} \left( \bm{\overline{X}}^{\mathrm{c}}_{tt'i} \bm{\bar{\zeta}}_{t'i} + \bm{\widehat{X}}^{\mathrm{c}}_{tt'i} \bm{\hat{\zeta}}_{t'i} \right) \label{eqn:DecisionRulesCont} \\
  & \bm{x}^{\mathrm{b}}_t = \sum_{t'=1}^t \sum_{i \in \widehat{\mathcal{I}}_{t'}} \bm{\widehat{X}}^{\mathrm{b}}_{tt'i} \bm{\hat{\zeta}}_{t'i}, \quad \bm{y}_t = \sum_{t'=1}^t \sum_{i \in \widehat{\mathcal{I}}_{t'}} \bm{\widehat{Y}}_{tt'i} \bm{\hat{\zeta}}_{t'i}, \quad \bm{z}_t = \sum_{t'=1}^t \sum_{i \in \widehat{\mathcal{I}}_{t'}} \bm{\widehat{Z}}_{tt'i} \bm{\hat{\zeta}}_{t'i}, \label{eqn:DecisionRulesBin}
\end{align}
\end{subequations}
where $\bm{\overline{X}}^{\mathrm{c}}_{tt'i} \in \mathbb{R}^{\bar{N}_t \times r_i}$, $\bm{\widehat{X}}^{\mathrm{c}}_{tt'i} \in \mathbb{R}^{\bar{N}_t \times g_i}$, $\bm{\widehat{X}}^{\mathrm{b}}_{tt'i} \in \{-1, \,0, \, 1\}^{\widehat{N}_t \times g_i}$, $\bm{\widehat{Y}}_{tt'i} \in \{-1, \,0, \, 1\}^{|\bar{\mathcal{I}}_{t+1}| \times g_i}$, and $\bm{\widehat{Z}}_{tt'i} \in \{-1, \,0, \, 1\}^{|\widehat{\mathcal{I}}_{t+1}| \times g_i}$. The decision rules for the continuous variables in \eqref{eqn:DecisionRulesCont} can form continuous or discontinuous piecewise linear functions of $\bm{\zeta}_{[t]}$. With the given domains for $\bm{\widehat{X}}^{\mathrm{b}}_{tt'i}$, $\bm{\widehat{Y}}_{tt'i}$, and $\bm{\widehat{Z}}_{tt'i}$, a variable following a decision rule of the form \eqref{eqn:DecisionRulesBin} is guaranteed to be binary, i.e. can only take the value 0 or 1, if we also constrain it to be in $[0,1]$ \citep{Bertsimas2018}. This property implies that the integrality constraints on the binary variables can be relaxed when these decision rules are applied.

By applying the proposed decision rules in conjunction with the uncertainty set in \eqref{eqn:OuterApproximation}, decision-dependent nonanticipativity is captured, and we arrive at the following approximation of the multistage robust optimization problem under endogenous uncertainty:
\begin{equation}
\label{eqn:MultistageApprox}
\begin{aligned}
  \minimize_{\substack{\bm{\overline{X}}^{\mathrm{c}}, \, \bm{\widehat{X}}^{\mathrm{c}}, \\ \bm{\widehat{X}}^{\mathrm{b}}, \, \bm{\widehat{Y}}, \, \bm{\widehat{Z}}}} \quad & \bm{e}_1^{\top} \left(\bm{\overline{X}}^{\mathrm{c}}_{111} + \bm{\widehat{X}}^{\mathrm{c}}_{111} \right) \\
  \st \quad & \left(\bm{\overline{X}}^{\mathrm{c}}_{tt'i}, \bm{\widehat{X}}^{\mathrm{c}}_{tt'i}, \bm{\widehat{X}}^{\mathrm{b}}_{tt'i}, \bm{\widehat{Y}}_{tt'i}, \bm{\widehat{Z}}_{tt'i}\right) \in \Omega_{tt'i} \quad \forall \, t \in \mathcal{T}, \, t'=1,\dots,t, \, i \in \widehat{\mathcal{I}}_{t'} \\
  & \hspace{-8pt} \left.
\begin{array}{l}
  \sum\limits_{t'=1}^t \sum\limits_{t''=1}^{t'} \sum\limits_{i \in \widehat{\mathcal{I}}_{t''}} \left[ \bm{A}^{\mathrm{c}}_{tt'} \left( \bm{\overline{X}}^{\mathrm{c}}_{t't''i} \bm{\bar{\zeta}}_{t''i} + \bm{\widehat{X}}^{\mathrm{c}}_{t't''i} \bm{\hat{\zeta}}_{t''i} \right) + \bm{A}^{\mathrm{b}}_{tt'} \bm{\widehat{X}}^{\mathrm{b}}_{t't''i} \bm{\hat{\zeta}}_{t''i} \right. \\
  \quad \quad \quad \quad \quad \quad \quad + \left. \bm{D}_{tt'} \bm{\widehat{Y}}_{t't''i} \bm{\hat{\zeta}}_{t''i} + \bm{H}_{tt'} \bm{\widehat{Z}}_{t't''i} \bm{\hat{\zeta}}_{t''i} \right] \leq \bm{B}_t \bm{\xi}_{[t]} \\ [8pt]
  \bm{0} \leq \sum\limits_{t''=1}^{t'} \sum\limits_{i \in \widehat{\mathcal{I}}_{t''}} \bm{\widehat{X}}^{\mathrm{b}}_{t't''i} \bm{\hat{\zeta}}_{t''i} \leq \bm{e} \quad \forall \, t'=1,\dots,t \\ [4pt]
    \bm{0} \leq \sum\limits_{t''=1}^{t'} \sum\limits_{i \in \widehat{\mathcal{I}}_{t''}}  \bm{\widehat{Y}}_{t't''i} \bm{\hat{\zeta}}_{t''i} \leq \bm{e} \quad \forall \, t'=1,\dots,t \\ [4pt]
    \bm{0} \leq \sum\limits_{t''=1}^{t'} \sum\limits_{i \in \widehat{\mathcal{I}}_{t''}} \bm{\widehat{Z}}_{t't''i} \bm{\hat{\zeta}}_{t''i} \leq \bm{e} \quad \forall \, t'=1,\dots,t
\end{array}
\right\rbrace \\
  & \quad \quad \quad \quad \quad \forall \, t \in \mathcal{T}, \, (\bm{\xi}_{[t]}, \bm{\zeta}_{[t]}, \bm{\bar{\zeta}}_{[t]}, \bm{\hat{\zeta}}_{[t]}) \in \widetilde{\Theta}_t \left( \bm{\overline{X}}^{\mathrm{c}}_{[t-1]}, \bm{\widehat{X}}^{\mathrm{c}}_{[t-1]}, \bm{\widehat{X}}^{\mathrm{b}}_{[t-1]}, \bm{\widehat{Y}}_{[t-1]}, \bm{\widehat{Z}}_{[t-1]} \right),
\end{aligned}
\end{equation}
where
\begin{equation*}
\Omega_{tt'i} = \left \{
 \left(\bm{\overline{X}}^{\mathrm{c}}_{tt'i}, \bm{\widehat{X}}^{\mathrm{c}}_{tt'i}, \bm{\widehat{X}}^{\mathrm{b}}_{tt'i}, \bm{\widehat{Y}}_{tt'i}, \bm{\widehat{Z}}_{tt'i}\right):
  \begin{array}{l}
    \bm{\overline{X}}^{\mathrm{c}}_{tt'i} \in \mathbb{R}^{\bar{N}_t \times r_i} \\ [4pt]
    \bm{\widehat{X}}^{\mathrm{c}}_{tt'i} \in \mathbb{R}^{\bar{N}_t \times g_i} \\ [4pt]
    \bm{\widehat{X}}^{\mathrm{b}}_{tt'i} \in \{-1, \,0, \, 1\}^{\widehat{N}_t \times g_i} \\ [4pt]
    \bm{\widehat{Y}}_{tt'i} \in \{-1, \,0, \, 1\}^{|\bar{\mathcal{I}}_{t+1}| \times g_i} \\ [4pt]
    \bm{\widehat{Z}}_{tt'i} \in \{-1, \,0, \, 1\}^{|\widehat{\mathcal{I}}_{t+1}| \times g_i}
  \end{array}
  \right \},
\end{equation*}
and $\bm{A}^{\mathrm{c}}_{tt'}$ and $\bm{A}^{\mathrm{b}}_{tt'}$ are appropriately defined constraint matrices associated with the corresponding continuous and binary $\bm{x}$-variables, respectively. The uncertainty set is
\begin{equation*}
\begin{aligned}
  & \widetilde{\Theta}_t \left( \bm{\overline{X}}^{\mathrm{c}}_{[t-1]}, \bm{\widehat{X}}^{\mathrm{c}}_{[t-1]}, \bm{\widehat{X}}^{\mathrm{b}}_{[t-1]}, \bm{\widehat{Y}}_{[t-1]}, \bm{\widehat{Z}}_{[t-1]} \right) \\
  & \quad \quad \quad = \left\lbrace \{\bm{\xi}_{[t]}\} \times \prod_{t'=1}^t \prod_{i \in \widehat{\mathcal{I}}_{t'}} Z_{t'i} \right\rbrace \bigcap \, \widehat{\Theta}_t \left( \bm{\overline{X}}^{\mathrm{c}}_{[t-1]}, \bm{\widehat{X}}^{\mathrm{c}}_{[t-1]}, \bm{\widehat{X}}^{\mathrm{b}}_{[t-1]}, \bm{\widehat{Y}}_{[t-1]}, \bm{\widehat{Z}}_{[t-1]} \right)
\end{aligned}
\end{equation*}
with
\begin{equation*}\small
\begin{aligned}
  \widehat{\Theta}_t &= \left\lbrace (\bm{\xi}_{[t]}, \bm{\zeta}_{[t]}, \bm{\bar{\zeta}}_{[t]}, \bm{\hat{\zeta}}_{[t]}) \in \mathbb{R}^{\widetilde{K}_{[t]}}: \sum_{t'=1}^t \left[ \bm{W}_{tt'} \bm{\xi}_{t'} + \bm{\overline{W}}_{tt'} \bm{\zeta}_{t'} \right] \right. \\
  & \quad \quad \leq \left. \sum\limits_{t'=1}^{t-1} \sum\limits_{t''=1}^{t'} \sum\limits_{i \in \widehat{\mathcal{I}}_{t''}} \left[ \bm{U}^{\mathrm{c}}_{tt'} \left( \bm{\overline{X}}^{\mathrm{c}}_{t't''i} \bm{\bar{\zeta}}_{t''i} + \bm{\widehat{X}}^{\mathrm{c}}_{t't''i} \bm{\hat{\zeta}}_{t''i} \right) + \bm{U}^{\mathrm{b}}_{tt'} \bm{\widehat{X}}^{\mathrm{b}}_{t't''i} \bm{\hat{\zeta}}_{t''i} + \bm{\overline{U}}_{tt'} \bm{\widehat{Y}}_{t't''i} \bm{\hat{\zeta}}_{t''i} + \bm{\widehat{U}}_{tt'} \bm{\widehat{Z}}_{t't''i} \bm{\hat{\zeta}}_{t''i} \right] \right\rbrace.
\end{aligned}
\end{equation*}

For ease of exposition, we define new constraint and right-hand-side matrices such that we can express problem \eqref{eqn:MultistageApprox} in the following compact form:
\begin{equation}\small
\label{eqn:MultistageApproxCompact}
\begin{aligned}
  \minimize_{\substack{\bm{\overline{X}}^{\mathrm{c}}, \, \bm{\widehat{X}}^{\mathrm{c}}, \\ \bm{\widehat{X}}^{\mathrm{b}}, \, \bm{\widehat{Y}}, \, \bm{\widehat{Z}}}} \quad & \bm{e}_1^{\top} \left(\bm{\overline{X}}^{\mathrm{c}}_{111} + \bm{\widehat{X}}^{\mathrm{c}}_{111} \right) \\
  \st \quad & \sum\limits_{t'=1}^t \sum\limits_{t''=1}^{t'} \sum\limits_{i \in \widehat{\mathcal{I}}_{t''}} \left[ \bm{\widetilde{A}}^{\mathrm{c}}_{tt'} \bm{\overline{X}}^{\mathrm{c}}_{t't''i} \bm{\bar{\zeta}}_{t''i} + \left(\bm{\widetilde{A}}^{\mathrm{c}}_{tt'} \bm{\widehat{X}}^{\mathrm{c}}_{t't''i} + \bm{\widetilde{A}}^{\mathrm{b}}_{tt'} \bm{\widehat{X}}^{\mathrm{b}}_{t't''i} + \bm{\widetilde{D}}_{tt'} \bm{\widehat{Y}}_{t't''i} + \bm{\widetilde{H}}_{tt'} \bm{\widehat{Z}}_{t't''i} \right) \bm{\hat{\zeta}}_{t''i} \right] \\
  & \quad \quad \quad \leq \sum_{t'=1}^t \bm{\widetilde{B}}_{tt'} \bm{\xi}_{t'} \quad \forall \, t \in \mathcal{T}, \, (\bm{\xi}_{[t]}, \bm{\zeta}_{[t]}, \bm{\bar{\zeta}}_{[t]}, \bm{\hat{\zeta}}_{[t]}) \in \widetilde{\Theta}_t \left( \bm{\overline{X}}^{\mathrm{c}}_{[t-1]}, \bm{\widehat{X}}^{\mathrm{c}}_{[t-1]}, \bm{\widehat{X}}^{\mathrm{b}}_{[t-1]}, \bm{\widehat{Y}}_{[t-1]}, \bm{\widehat{Z}}_{[t-1]} \right) \\
  & \left(\bm{\overline{X}}^{\mathrm{c}}_{tt'i}, \bm{\widehat{X}}^{\mathrm{c}}_{tt'i}, \bm{\widehat{X}}^{\mathrm{b}}_{tt'i}, \bm{\widehat{Y}}_{tt'i}, \bm{\widehat{Z}}_{tt'i}\right) \in \Omega_{tt'i} \quad \forall \, t \in \mathcal{T}, \, t'=1,\dots,t, \, i \in \widehat{\mathcal{I}}_{t'},
\end{aligned}
\end{equation}
where $\bm{\widetilde{A}}^{\mathrm{c}}_{tt'} \in \mathbb{R}^{P_t \times \bar{N}_{t'}}$, $\bm{\widetilde{A}}^{\mathrm{b}}_{tt'} \in \mathbb{R}^{P_t \times \widehat{N}_{t'}}$, $\bm{\widetilde{D}}_{tt'} \in \mathbb{R}^{P_t \times |\bar{\mathcal{I}}_{t'+1}|}$, $\bm{\widetilde{H}}_{tt'} \in \mathbb{R}^{P_t \times |\widehat{\mathcal{I}}_{t'+1}|}$, and $\bm{\widetilde{B}}_{tt'} \in \mathbb{R}^{P_t \times K_{t'}}$.

\subsection{Reformulation}

Applying standard robust optimization techniques based on constraint-wise worst-case reformulation and linear programming (LP) duality \citep{Gorissen2015}, the semi-infinite program \eqref{eqn:MultistageApproxCompact} can be reformulated into the following problem:
\begin{equation}
\label{eqn:MultistageReformulation}
\begin{aligned}
\minimize_{\substack{\bm{\overline{X}}^{\mathrm{c}}, \, \bm{\widehat{X}}^{\mathrm{c}}, \, \bm{\widehat{X}}^{\mathrm{b}}, \\ \bm{\widehat{Y}}, \, \bm{\widehat{Z}}, \, \bm{\Phi}, \, \bm{\Psi}}} \quad & \bm{e}_1^{\top} \left(\bm{\overline{X}}^{\mathrm{c}}_{111} + \bm{\widehat{X}}^{\mathrm{c}}_{111} \right) \\
\st \quad & \sum_{t'=1}^t \sum_{i \in \widehat{\mathcal{I}}_{t'}} \bm{\psi}_{tt'i} \leq \bm{0} \quad \forall \, t \in \mathcal{T} \\
  & \bm{\Phi}_t \bm{w}_{tt'i} = - \bm{\tilde{b}}_{tt'i} \quad \forall \, t \in \mathcal{T}, \, t'=1,\dots,t, \, i \in \widetilde{\mathcal{I}}_{t'} \\
  & \bm{\psi}_{tt''i} + \bm{\Phi}_t \left[ \bm{\bar{w}}_{tt''i} v_{t''i} - \sum\limits_{t'=t''}^{t-1} \bm{U}^{\mathrm{c}}_{tt'} \bm{\overline{X}}^{\mathrm{c}}_{t't''i} \bm{\bar{v}}_{t''i} \right. \\
  & \quad \quad \quad \quad \quad \quad - \left. \sum\limits_{t'=t''}^{t-1} \left( \bm{U}^{\mathrm{c}}_{tt'} \bm{\widehat{X}}^{\mathrm{c}}_{t't''i} + \bm{U}^{\mathrm{b}}_{tt'} \bm{\widehat{X}}^{\mathrm{b}}_{t't''i} + \bm{\overline{U}}_{tt'} \bm{\widehat{Y}}_{t't''i} + \bm{\widehat{U}}_{tt'} \bm{\widehat{Z}}_{t't''i} \right) \bm{\hat{v}}_{t''i} \right] \\
  & \quad \quad \geq \sum\limits_{t'=t''}^t \left[ \bm{\widetilde{A}}^{\mathrm{c}}_{tt'} \bm{\overline{X}}^{\mathrm{c}}_{t't''i} \bm{\bar{v}}_{t''i} + \left(\bm{\widetilde{A}}^{\mathrm{c}}_{tt'} \bm{\widehat{X}}^{\mathrm{c}}_{t't''i} + \bm{\widetilde{A}}^{\mathrm{b}}_{tt'} \bm{\widehat{X}}^{\mathrm{b}}_{t't''i} + \bm{\widetilde{D}}_{tt'} \bm{\widehat{Y}}_{t't''i} + \bm{\widetilde{H}}_{tt'} \bm{\widehat{Z}}_{t't''i} \right) \bm{\hat{v}}_{t''i} \right] \\
  & \quad \quad \quad \quad \quad \quad \quad \quad \quad \quad \quad \quad \quad \quad \quad \quad \quad \quad \forall \, t \in \mathcal{T}, \, t''=1,\dots,t, \, i \in \widehat{\mathcal{I}}_{t''}, \, \bm{\tilde{v}}_{t''i} \in \widetilde{\mathcal{V}}_{t''i} \\
  & \bm{\psi}_{tt'i} \in \mathbb{R}^{P_t} \quad \forall \, t \in \mathcal{T}, \, t'=1,\dots,t, \, i \in \widehat{\mathcal{I}}_{t'} \\
  & \bm{\Phi}_t \in \mathbb{R}_+^{P_t \times Q_t} \quad \forall \, t \in \mathcal{T} \\
  & \left(\bm{\overline{X}}^{\mathrm{c}}_{tt'i}, \bm{\widehat{X}}^{\mathrm{c}}_{tt'i}, \bm{\widehat{X}}^{\mathrm{b}}_{tt'i}, \bm{\widehat{Y}}_{tt'i}, \bm{\widehat{Z}}_{tt'i}\right) \in \Omega_{tt'i} \quad \forall \, t \in \mathcal{T}, \, t'=1,\dots,t, \, i \in \widehat{\mathcal{I}}_{t'}
\end{aligned}
\end{equation}
where $\bm{w}_{tt'i}$, $\bm{\bar{w}}_{tt'i}$ $\bm{\tilde{b}}_{tt'i}$, and $\bm{\psi}_{ti}$ denote the columns of $\bm{W}_{tt'}$, $\bm{\overline{W}}_{tt'}$, $\bm{\widetilde{B}}_{tt'}$, and $\bm{\Psi}_t$, respectively, corresponding to uncertain parameter $i$. The detailed derivation of the reformulation can be found in the supplementary material.

Problem \eqref{eqn:MultistageReformulation} is generally a mixed-integer quadratically constrained program (MIQCP) as it involves bilinear terms. However, if the endogenous uncertainty set only depends on binary variables, the problem can be reformulated into a mixed-integer linear program (MILP) by expressing each integer variable as the difference between two binary variables and linearizing the bilinear terms, which will all be products of a continuous and a binary variable (for more details, see Feng et al. (2020)).

\section{Computational Case Studies}
\label{sec:CaseStudies}

In this section, we present the results from four computational case studies that demonstrate the versatility of the proposed modeling framework. All model instances were implemented in Julia v1.3.1 using the modeling environment JuMP v0.21.2 \citep{Lubin2015} and solved using Gurobi v8.1.1 on a Intel Core i7-8700 CPU at 3.20 GHz machine with 8 GB RAM.

\subsection{Pilot Plant and Plant Redesign}

We first consider an illustrative example that is based on Example \ref{exp:PilotPlant}. In this problem, we have a design of an industrial-scale plant whose capacity $q$ is uncertain and only becomes known after the plant is built. However, as described in Example \ref{exp:PilotPlant}, a pilot plant can be built such that its capacity $p$ helps provide a more accurate prediction of $q$. Once $p$ is observed, the marginal uncertainty set for $q$ is updated, based on which we can decide to build one or two plants using the current design or redesign the plant and build a plant using the new design. The solution has to ensure that a given demand $d$ can be met, and the objective is to optimize the worst case. The associated robust optimization problem can be formulated as follows:
\begin{equation}
\label{eqn:PilotPlant}
\begin{aligned}
\minimize_{x, y, z_1, z_2} \quad & \delta x + \max_{(p,q,\gamma) \in \Xi(x)} \gamma y(x p) + \rho\left[z_1(x p)+z_2(x p)\right] \\
\st \quad & x \in \{0,1\} \\
& \hspace{-8pt} \left.
\begin{array}{l}
  y(x p) \leq x \\[4pt]
  d \leq q \left[z_1(x p)+z_2(x p)\right] + d y(x p) \\[4pt]
  y(x p), \, z_1(x p), \, z_2(x p) \in \{0,1\}
\end{array}
\right\rbrace \quad \forall \, (p,q,\gamma) \in \Xi(x),
\end{aligned}
\end{equation}
where the binary variable $x$ equals 1 if the pilot plant is built and 0 otherwise, $y$ equals 1 if a plant with a new design and capacity $d$ is built, and $z_1$ and $z_2$ indicate how many (maximum two) plants with the current design are built. Nonnegative cost coefficients in the cost function are denoted by $\delta$, $\gamma$, and $\rho$, where $\gamma$ is an uncertain parameter as it depends on the pilot plant capacity. As indicated by the constraint $y(xp) \leq x$, a plant redesign is only possible if a pilot plant is built and hence more process knowledge is available. Finally, the total plant capacity has to be greater than or equal to demand $d$. The uncertainty set depends on $x$ and can be expressed as follows:
\begin{equation*}
\Xi(x) = \left\lbrace
(p,q,\gamma) \in \mathbb{R}_+^3:
\begin{array}{l}
  p^{\min} x \leq p \leq p^{\max} x \\ [4pt]
  q^{\min}(1-x) + (\alpha-\Delta)x + \beta p \leq q \leq q^{\max}(1-x) + (\alpha+\Delta)x + \beta p \\ [4pt]
  \gamma = (\rho + \tilde{\gamma} p^{\max}) x - \tilde{\gamma} p
\end{array}
\right\rbrace,
\end{equation*}
where $p$ and $\gamma$ can only take nonzero values if $x=1$. Similarly, $(\alpha-\Delta)+\beta p \leq q \leq (\alpha+\Delta)+\beta p$ if $x=1$, and $q^{\min} \leq q \leq q^{\max}$ if $x=0$. The parameters are given such that $q^{\min} = (\alpha-\Delta)+\beta p^{\min}$ and $q^{\max} = (\alpha+\Delta)+\beta p^{\max}$. We further assume that $q^{\min} < d \leq 2 q^{\min}$ and that there exists a $\hat{p} < p^{\max}$ such that $d = (\alpha-\Delta)+\beta \hat{p}$.

\subsubsection{Analytical Solution}
\label{sec:PilotPlantWCsolution}

Problem \eqref{eqn:PilotPlant} can be solved analytically, and the solution is illustrated in the decision diagram shown in Figure \ref{fig:PilotPlantDecisions}. If we choose to build a pilot plant ($x=1$), we can take recourse actions ($y$, $z_1$, and $z_2$) based on the realization of $p$. If $p \geq \hat{p}$, we know that $q \geq d$; hence, we only have to build one plant using the current design. If $p < \hat{p}$, building one plant with the current design may not be sufficient to meet demand $d$. In this case, we choose the less expensive option between building one plant with a new design that achieves a capacity of $d$ and building two plants with the current design. The costs for these two choices are $v = \delta + \rho + \tilde{\gamma}(p^{\max}-p)$ and $v = \delta + 2\rho$, respectively. Thus, the first option is less expensive than the second if $\tilde{\gamma}(p^{\max}-p) < \rho$. In contrast, if we do not build a pilot plant ($x=0$), there will be no recourse; in that case, in order to guarantee feasibility for every possible realization of $q$, we have to build two plants using the current design, which results in a cost of $v=2\rho$.

\begin{figure}[ht!]\centering
\includegraphics[width=6in]{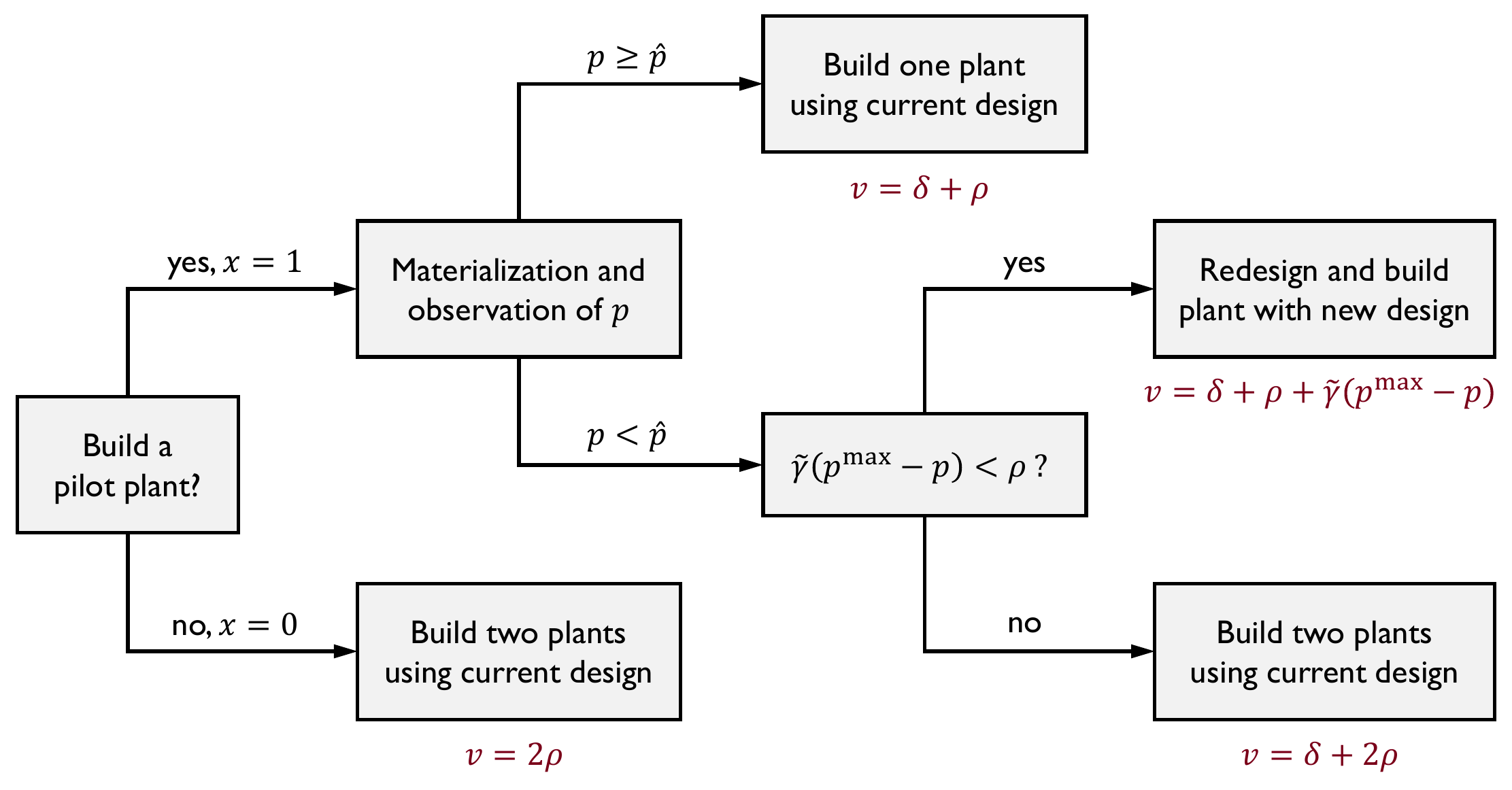}
\caption{Decision diagram for illustrative pilot plant example.}
\label{fig:PilotPlantDecisions}
\end{figure}

The problem further simplifies since the objective is to minimize the worst-case cost and the worst case is already known a priori, namely $p=p^{\min}$, $q = q^{\min}$, and $\gamma = \rho + \tilde{\gamma}(p^{\max}-p^{\min})$. Therefore, the optimal value of \eqref{eqn:PilotPlant} is simply
\begin{equation*}
v^* = \min \left\lbrace 2\rho, \, \delta+\rho+\tilde{\gamma}(p^{\max}-p^{\min}) \right\rbrace,
\end{equation*}
and the corresponding optimal solution is as follows: If $\rho < \delta+\tilde{\gamma}(p^{\max}-p^{\min})$, $x^* = y^* = 0$ and $z_1^* = z_2^* = 1$; if $\rho > \delta+\tilde{\gamma}(p^{\max}-p^{\min})$, $x^* = y^* = 1$ and $z_1^* = z_2^* = 0$. Both solutions are optimal if $\rho = \delta+\tilde{\gamma}(p^{\max}-p^{\min})$.

For our computational case study, we choose $\rho = 100$, $d = 59$, $p^{\min} = 5$, $p^{\max} = 10$, $q^{\min} = 35$, $q^{\max} = 75$, $\alpha = 10$, $\beta = 6$, and $\Delta = 5$. The heat map in Figure \ref{fig:Case1-WorstCaseObjective} shows $v^*$ as a function of $\delta$ and $\tilde{\gamma}$. The red line is given by $\delta = \rho - (p^{\max}-p^{\min})\tilde{\gamma} = 100 - 5 \tilde{\gamma}$, which indicates the boundary at which $v^*$ switches from $\delta+\rho+\tilde{\gamma}(p^{\max}-p^{\min})$ to $2 \rho$.

\begin{figure}[ht]\centering
\subfloat[Worst-case cost]{
  \label{fig:Case1-WorstCaseObjective}
	\fbox{\includegraphics[height=2.6in]{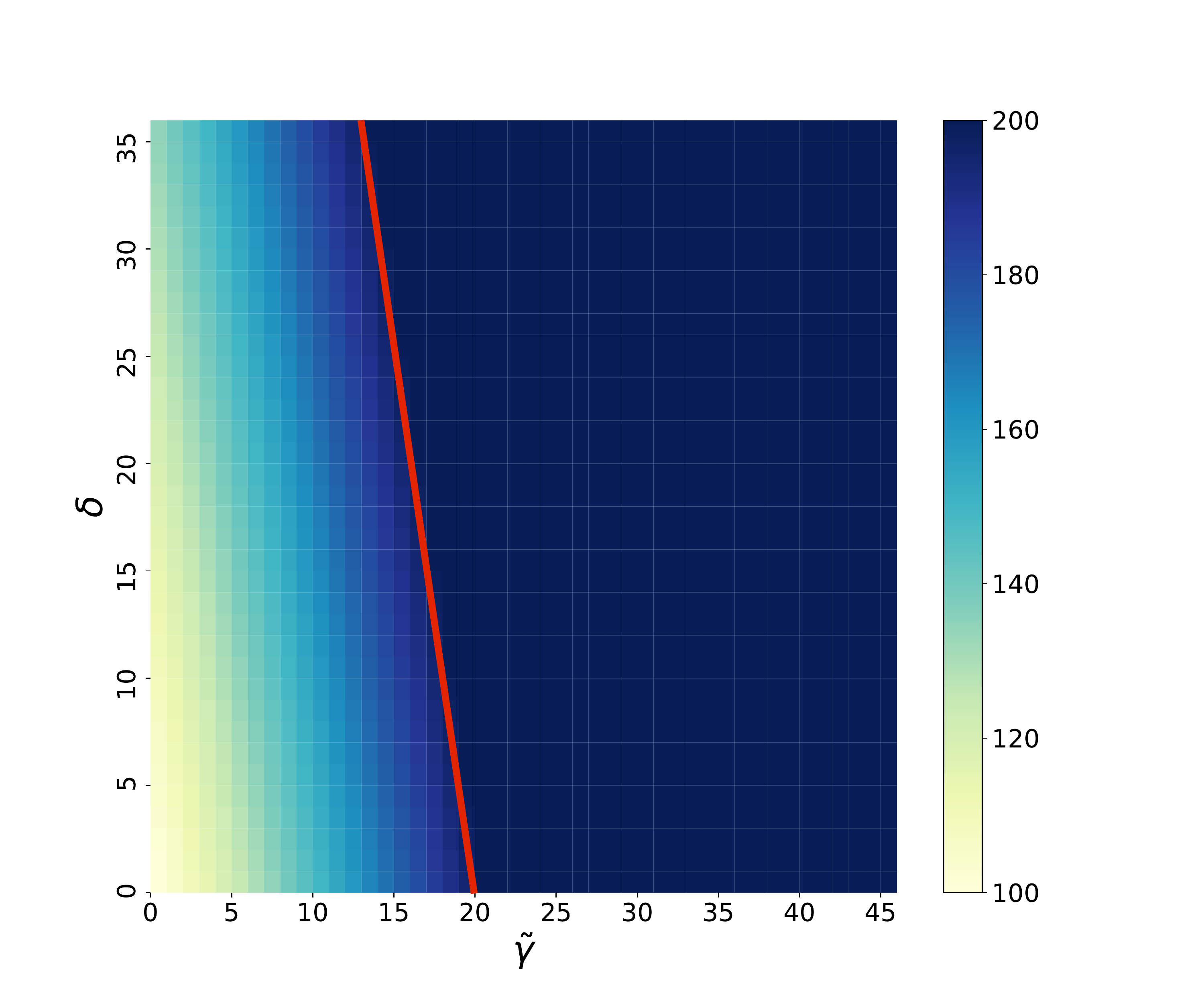}}} 
\subfloat[Approximate expected cost]{
  \label{fig:Case1-SAAObjective}
	\fbox{\includegraphics[height=2.6in]{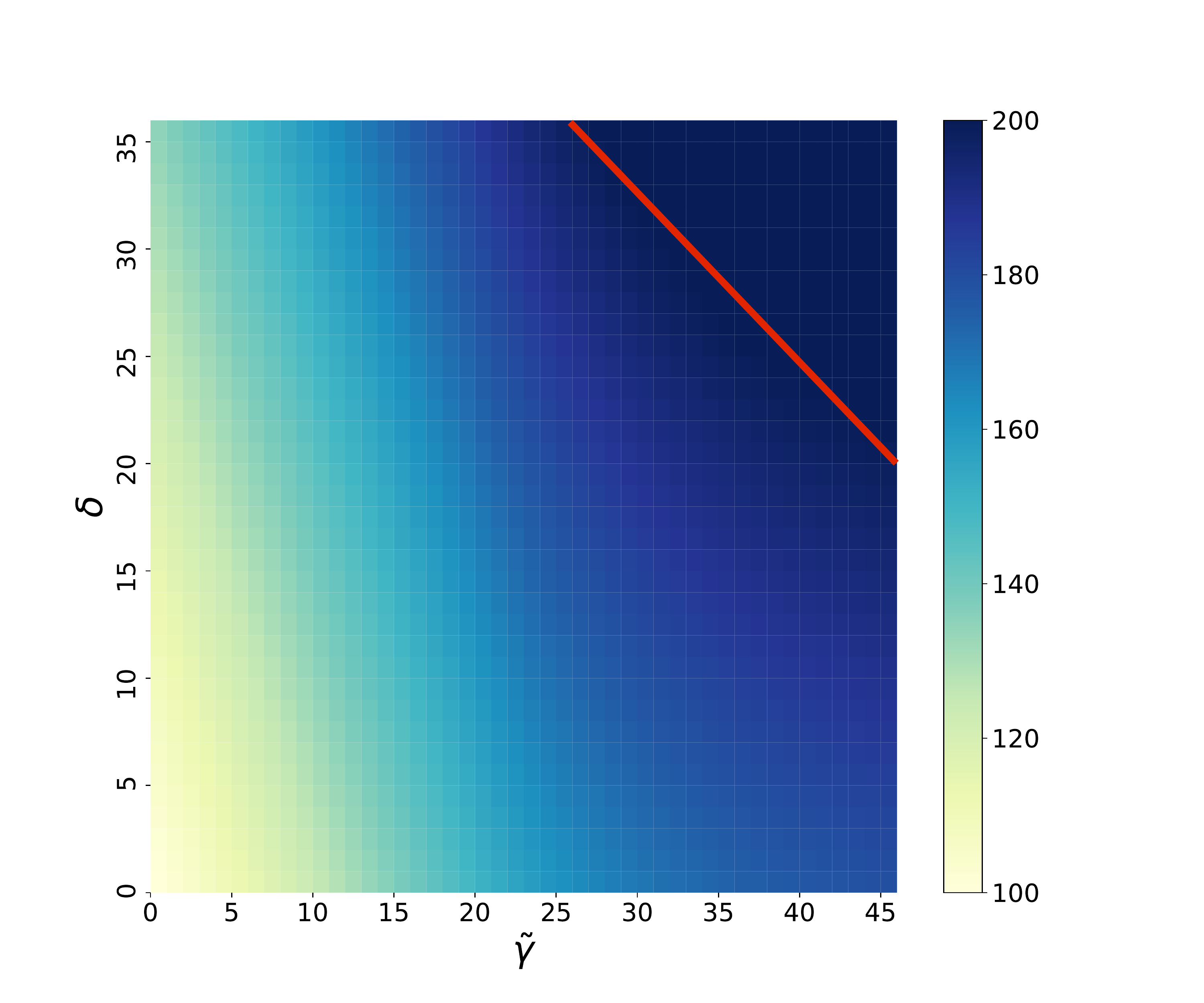}}}
\caption{The heat maps show the (a) worst-case cost and (b) scenario-based approximation of the expected cost as functions of $\delta$ and $\tilde{\gamma}$.}
\label{fig:Concept}
\end{figure}

\subsubsection{Non-Worst-Case Objective Function}

As mentioned above, in this example, recourse is only possible if $x=1$, which essentially means that this decision determines whether the problem is static or two-stage. However, note that recourse is not required in the optimal solution of \eqref{eqn:PilotPlant}, which is due to the fact that worst-case cost minimization is considered and that the worst case is known a priori. In the following, we consider an alternative, scenario-based objective function, which is often a better choice in practice. Here, we introduce a discrete scenario set $\mathcal{S}$, which contains possible realizations of $p$, each denoted by $\bar{p}_s$ with probability $\phi_s$, where $\sum_{s \in \mathcal{S}} \phi_s=1$. We then replace the objective function in problem \eqref{eqn:PilotPlant} with
\begin{equation}
\label{eqn:PilotPlantScenObj}
\delta x + \sum_{s \in \mathcal{S}} \phi_s \left( \bar{\gamma}_s y(x \bar{p}_s) + \rho\left[z_1(x \bar{p}_s)+z_2(x \bar{p}_s)\right] \right)
\end{equation}
with $\bar{\gamma}_s = \rho+\tilde{\gamma}(p^{\max}-\bar{p}_s)$. This objective function resembles a sample average approximation of the expected cost as commonly used in stochastic programming. We further reformulate the constraints to eliminate random recourse and remove $\gamma$ from the uncertainty set, arriving at the following formulation:
\begin{equation}
\label{eqn:PilotPlantRef}
\begin{aligned}
\minimize_{x, y, z_1, z_2, w_1, w_2} \quad & \delta x + \sum_{s \in \mathcal{S}} \phi_s \left( \bar{\gamma}_s y(x \bar{p}_s) + \rho\left[z_1(x \bar{p}_s)+z_2(x \bar{p}_s)\right] \right) \\
\st \quad \; & x \in \{0,1\} \\
& \hspace{-8pt} \left.
\begin{array}{l}
  y(x p) \leq x \\[4pt]
  w_1 (x p) \leq q \\[4pt]
  w_1 (x p) \leq q^{\max} z_1(x p) \\[4pt]
  w_2 (x p) \leq q \\[4pt]
  w_2 (x p) \leq q^{\max} z_2(x p) \\[4pt]
  d \leq w_1 (x p) + w_2 (x p) + d y(x p) \\[4pt]
  y(x p), \, z_1(x p), \, z_2(x p) \in \{0,1\} \\
  w_1 (x p), \, w_2 (x p) \in \mathbb{R}_+
\end{array}
\right\rbrace \quad \forall \, (p,q) \in \overline{\Xi}(x)
\end{aligned}
\end{equation}
with
\begin{equation*}\small
\overline{\Xi}(x) = \left\lbrace
(p,q) \in \mathbb{R}_+^2:
\begin{array}{l}
  p^{\min} x \leq p \leq p^{\max} x \\ [4pt]
  q^{\min}(1-x) + (\alpha-\Delta)x + \beta p \leq q \leq q^{\max}(1-x) + (\alpha+\Delta)x + \beta p 
\end{array}
\right\rbrace.
\end{equation*}
Note that we have introduced two new continuous recourse variables, $w_1$ and $w_2$, which represent the production amounts of plants built with the current design.

The optimal solution to this modified problem is less trivial as now recourse decisions, as depicted in Figure \ref{fig:PilotPlantDecisions}, play a crucial role. Still, we can solve it analytically. If we choose to build a pilot plant, the optimal recourse decisions are just as described in Section \ref{sec:PilotPlantWCsolution}. As a result, the optimal value is
\begin{equation*}
v^* = \min \left\lbrace 2\rho, \, \delta + \sum_{s \in \mathcal{S}_1} \phi_s \rho + \sum_{s \in \mathcal{S}_2} \phi_s [\rho + \tilde{\gamma}(p^{\max}-\bar{p}_s)] + \sum_{s \in \mathcal{S}_3} 2 \phi_s \rho \right\rbrace,
\end{equation*}
where $\mathcal{S}_1 := \{s \in \mathcal{S}: \bar{p}_s \geq \hat{p}\}$, $\mathcal{S}_2 := \{s \in \mathcal{S}: \bar{p}_s < \hat{p}, \, \tilde{\gamma}(p^{\max}-\bar{p}_s) < \rho\}$, and $\mathcal{S}_3 := \{s \in \mathcal{S}: \bar{p}_s < \hat{p}, \, \tilde{\gamma}(p^{\max}-\bar{p}_s) \geq \rho\}$ such that $\mathcal{S} = \mathcal{S}_1 \cup \mathcal{S}_2 \cup \mathcal{S}_3$.

We can also apply the proposed decision rule approach to solve this problem. To obtain the exact optimal solution, the marginal support of $p$ needs to be partitioned using appropriate breakpoints. Depending on the parameter values, we require at least one or two breakpoints. If $\tilde{\gamma}(p^{\max}-p^{\min}) \leq \rho$ or $\tilde{\gamma}(p^{\max}-\hat{p}) \geq \rho$, we only need one breakpoint at $\hat{p}$. If $\tilde{\gamma}(p^{\max}-p^{\min}) > \rho$ and $\tilde{\gamma}(p^{\max}-\hat{p}) < \rho$, we need two breakpoints, one at $p^{\max}-\rho/\tilde{\gamma}$ and one at $\hat{p}$. Note that the same decision rules are used across all scenarios in $\mathcal{S}$; as a result, compared to the worst-case formulation, no additional variables are added to the model such that a comparable computational complexity is achieved. We solve the problem using 100 scenarios sampled from a uniform distribution supported on the given uncertainty set, and obtain the heat map shown in Figure \ref{fig:Case1-SAAObjective}.

\subsection{Maintenance Planning with Inspections}

This example involves type-1, type-2a, and type-2b endogenous uncertainty. Here, we consider integrated production and maintenance planning with equipment degradation. We use the remaining useful life (RUL) as a measure of equipment health, which decreases with continued operation. To avoid equipment failure, the RUL has to remain greater than zero and can be restored through maintenance. Investing in an equipment upgrade, which would reduce the degradation rate, is also considered. However, degradation processes are inherently stochastic, which renders the change in RUL an uncertain parameter. Furthermore, RUL is an abstract quantity that is often difficult to estimate from operational process data such that elaborate inspections are required to obtain reliable equipment health information. Therefore, in this example, we assume that the RUL can only be observed through inspections, and formulate the resulting multistage robust optimization problem as follows:
\begin{equation}
\label{eqn:Maintenance}
\begin{aligned}
\minimize_{x, \bm{y}, \bm{p}, \bm{s}, \bm{r}, \bm{m}, \bm{f}, \bm{z}} \quad & \alpha x + \max_{(\bm{\xi},\bm{\bar{\xi}},\bm{\zeta}) \in \Theta_T(x,\bm{y},\bm{z})} \sum_{t \in \mathcal{T}} \left( \beta y_t + \gamma p_t + \delta z_t + \rho m_t - \varphi f_t \right) \\
\st \quad \;\; & x \in \{0,1\} \\
& \hspace{-8pt} \left.
\begin{array}{l}
  s_t = s_0 + \sum\limits_{t'=1}^t (p_{t'}-d_{t'}) \leq s^{\max} \\[4pt]
  p_t \leq C y_t \\[4pt]
  r_t = r_0 - \sum\limits_{t'=1}^t \xi_{t'} + \sum\limits_{t'=1}^t f_{t'} \leq r^{\max} \\[4pt]
  f_t \leq r^{\max} m_t \\[4pt]
  y_t + m_t \leq 1 \\[4pt]
  m_t \leq z_{t-1} \\[4pt]
  p_t, \, s_t, \, r_t, \, f_t \in \mathbb{R}_+, \quad y_t, \, z_t, \, m_t \in \{0,1\}
\end{array}
\right\rbrace
\begin{array}{l}
  \forall \, t \in \mathcal{T}, \\[4pt]
  (\bm{\xi}_{[t]},\bm{\bar{\xi}}_{[t]},\bm{\zeta}_{[t]}) \in \Theta_t(x,\bm{y}_{[t]},\bm{z}_{[t]}),
\end{array}  
\end{aligned}
\end{equation}
where $x$ equals 1 if an equipment upgrade is performed, $y_t$ equals 1 if the plant operates in time period $t$, $p_t$ is the amount produced in time period $t$, $s_t$ is the inventory level at time $t$ (end of time period $t$), $r_t$ is the RUL at time $t$, $m_t$ equals 1 if maintenance is performed in time period $t$, $f_t$ is the amount of RUL restored in time period $t$, and $z_t$ equals 1 if an inspection is conducted at time $t$. The recourse variables are functions of uncertain parameters; however, for the sake of brevity, we do not explicitly indicate this in \eqref{eqn:Maintenance}. With given positive cost coefficients $\alpha$, $\beta$, $\gamma$, $\delta$, $\rho$, and $\varphi$, the objective is to minimize the worst-case cost over the planning horizon given by the set of time periods $\mathcal{T}=\{1,\dots,T\}$. Note that a negative cost is assigned to $f_t$ in order to encourage maintenance toward the end of the equipment's residual life and to retain a reasonable RUL value at the end of the planning horizon. Given an initial inventory $s_0$ and demand $d_t$ for every time period $t$, the second constraint in \eqref{eqn:Maintenance} expresses material balance and sets an upper bound $s^{\max}$ on the inventory. The next constraint states that the production amount is bounded by the plant capacity $C$. Assuming the initial RUL $r_0$ is known, $r_t$ is computed and bounded in the next constraint. The RUL reduction caused by operation in time period $t$ is denoted by $\xi_t$, which is uncertain. If maintenance is performed, the RUL can be restored to its maximum $r^{\max}$, as stated in the next inequality. Finally, the last two constraints state that maintenance can only be performed if the plant is shut down and needs to be immediately preceded by an inspection. We further assume that $z_0 = m_1 = f_1 = 0$.

The decision-dependent uncertainty set, which includes auxiliary uncertain parameters, is
\begin{equation*}
\Theta_t(x,\bm{y}_{[t]},\bm{z}_{[t]}) = \left\lbrace
(\bm{\xi}_{[t]},\bm{\bar{\xi}}_{[t]},\bm{\zeta}_{[t]}) \in \mathbb{R}^{3t}_+:
\begin{array}{l}
  \bar{\xi}_{t'} = \sum\limits_{t''=1}^{t'} \xi_{t''} \quad \forall \, t'=1,\dots,t \\ [8pt]
  \xi^{\min} y_{t'} \leq \xi_{t'} \leq \xi^{\max} y_{t'} \quad \forall \, t'=1,\dots,t \\ [4pt]
  \xi_{t'} \leq \xi^{\max} - \xi^{\Delta} x \quad \forall \, t'=1,\dots,t \\ [4pt]
  \bar{\xi}_{t'} - t' \xi^{\max} (1-z_{t'}) \leq \zeta_{t'} \leq \bar{\xi}_{t'} \quad \forall \, t'=1,\dots,t \\[4pt]
  \zeta_{t'} \leq \bar{\xi}^{\max}_{t'} z_{t'} \quad \forall \, t'=1,\dots,t
\end{array}
\right\rbrace,
\end{equation*}
where the auxiliary uncertain parameter $\bar{\xi}_t$ is the cumulative RUL reduction up to time $t$. The reason for introducing $\bar{\xi}_t$ is that an inspection at time $t$ reveals $\bar{\xi}_t$ but not each individual $\xi_{t'}$ that has contributed to the cumulative RUL reduction since the last inspection. The RUL reduction in time period $t$, $\xi_t$, is zero if the plant does not operate, i.e. $y_t=0$, and bounded by $\xi^{\min}$ and $\xi^{\max}$ otherwise. This upper bound is further reduced by $\xi^{\Delta}$ if an equipment upgrade is performed, i.e. $x=1$. We have another auxiliary variable $\zeta_t$ for every $t \in \mathcal{T}$, which takes the value of $\bar{\xi}_t$ if it is observed through an inspection at time $t$, i.e. $z_t = 1$, and is zero otherwise.

Note that $\mathcal{T}$ denotes the set of time periods, not the set of stages. In fact, strictly speaking, the number of stages depends on the solution and is directly related to the number of inspections. The maximum possible number of stages is $T$, which is only achieved if inspection is performed at every $t \in \mathcal{T}$. In that case, after eliminating the state variables $s_t$ and $r_t$, the first-stage decisions are $x$, $y_1$, $p_1$, and $z_1$. Then, $\bar{\xi}_1$ is observed and the second-stage decisions are $y_2$, $p_2$, $m_2$, $f_2$, and $z_2$; this continues similarly up to stage $T$. The number of stages is implicitly encoded in the recourse variables that are functions of only observed uncertain parameters. Specifically, for all $t \in \mathcal{T}$, $y_t = y_t(\bm{\zeta}_{[t-1]})$, $p_t = p_t(\bm{\zeta}_{[t-1]})$, $m_t = m_t(\bm{\zeta}_{[t-1]})$, $f_t = f_t(\bm{\zeta}_{[t-1]})$, and $z_t = z_t(\bm{\zeta}_{[t-1]})$.

We conduct a case study with the following data: $T = 7$, $\beta = \$\,2 \times 10^3$, $\gamma = \$\,400$, $\delta = \$\,10^3$, $\rho = \$\,2.5 \times 10^5$, $\varphi = \$\,2 \times 10^3$, $s_0 = 10$, $s^{\max} = 50$, $C = 90$, $r_0 = 40$, $r^{\max} = 125$, $d_t = 50$ for all $t \in \mathcal{T}$, $\xi^{\min} = 4$, $\xi^{\max} = 25$, and $\xi^{\Delta} = 13$. Two cases are considered: Case 1, in which the equipment upgrade cost $\alpha$ is $\$\,5 \times 10^4$, and Case 2 with $\alpha$ being $\$\, 10^4$. For each case, we solve one instance using the above ARO model with one breakpoint placed at 8 for each $\xi_t$, and another instance that disallows recourse, resulting in a static problem (SRO). The optimal values and solution times for all four instances are shown in Table \ref{tab:Example2}. In both Cases 1 and 2, one can observe a clear benefit from recourse, which is reflected in an improvement in the optimal value of about 3\,\%. However, the computational cost for solving the ARO model is significantly higher due to the larger model size. Specifically, the ARO model has 218 binary variables, 320,150 continuous variables, and 952,374 constraints after Gurobi's preprocessing.

\begin{table}[ht!]\centering
\setlength\tabcolsep{6pt}
	\caption{Computational results for instances of the maintenance planning problem.}
	\begin{tabular}{ccccc}
	\toprule
    & \makecell{\textbf{Cost of} \\ \textbf{upgrade (k\$)}} & \textbf{Model} & \makecell{\textbf{Optimal} \\ \textbf{value (k\$)}}  & \makecell{\textbf{Solution} \\ \textbf{time (s)}}  \\
    \midrule
    \multirow{2}{*}{\textbf{Case 1}} & \multirow{2}{*}{50} & SRO & 219 & 425 \\
    & & ARO  & 212 & 13,621 \\
    \midrule
    \multirow{2}{*}{\textbf{Case 2}} & \multirow{2}{*}{10} & SRO & 213 & 382 \\
    & & ARO   & 206 & 7,837 \\
 	\bottomrule
	\end{tabular}
	\label{tab:Example2}
\end{table}

The results in Table \ref{tab:Example2} also indicate that an equipment upgrade can be beneficial if the cost is sufficiently low as the solution suggests investing in an equipment upgrade in Case 2 but not in Case 1. Figure \ref{fig:Example2} further shows how the equipment upgrade affects the optimal schedule. As depicted in Figure \ref{fig:NoUpgrade}, without equipment upgrade, inspection is performed at time 1. Then, depending on the realization of $\bar{\xi}_1$, the top or bottom schedule is chosen. If $\bar{\xi}_1 \geq 8$, maintenance is performed in time period 2, immediately after the inspection; otherwise, maintenance is postponed to time period 3. In contrast, the solution for Case 2, as shown in Figure \ref{fig:Upgrade}, suggests inspecting the equipment at time 2. This later time point is made possible by the equipment upgrade that reduces the maximum degradation rate. Then, if $\bar{\xi}_2 \geq 16$, maintenance is performed in time period 3; otherwise, maintenance is scheduled for time period 6.

\begin{figure}[ht]\centering
\subfloat[ARO solution for Case 1]{
  \label{fig:NoUpgrade}
	\fbox{\includegraphics[height=1.9in]{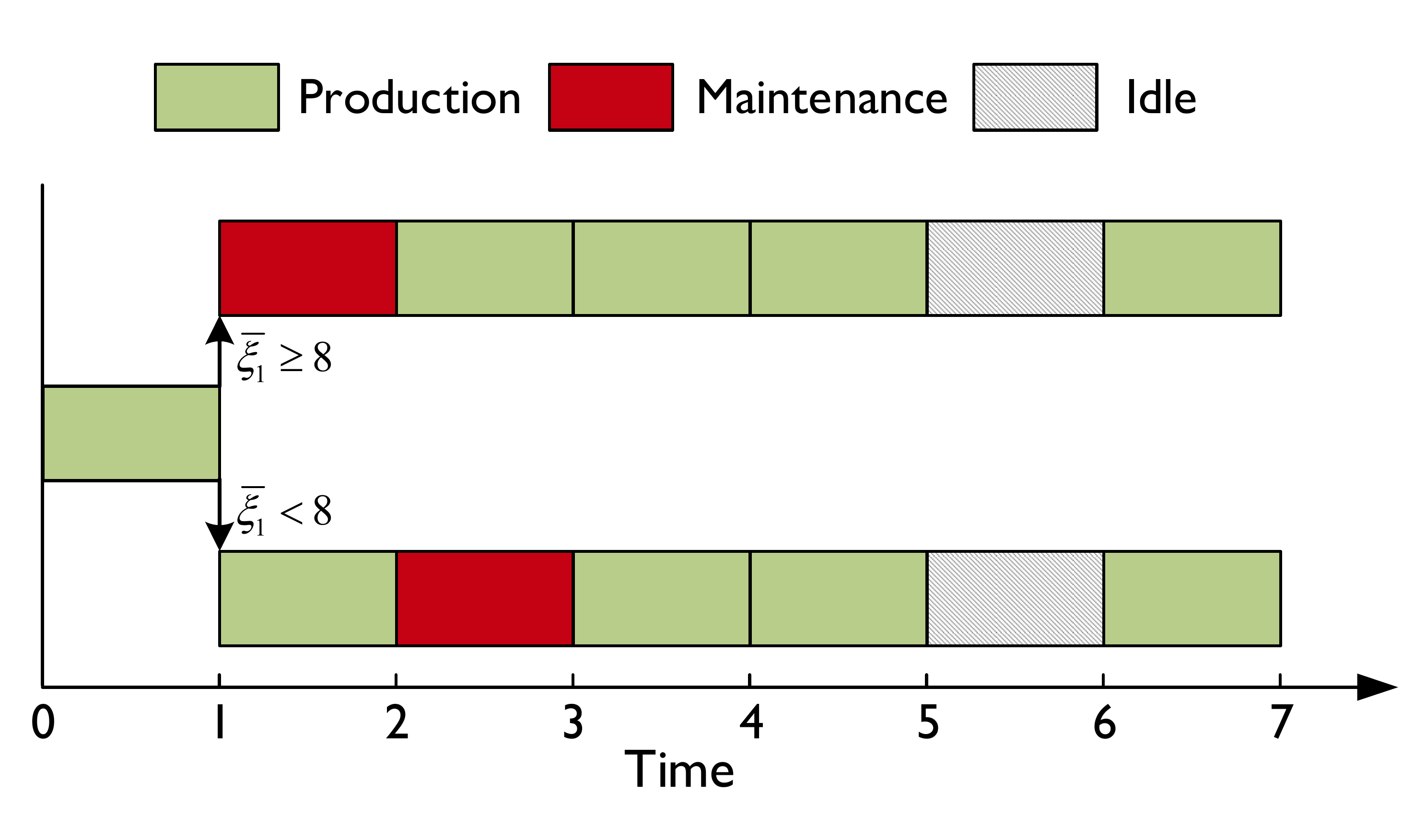}}} 
\subfloat[ARO solution for Case 2]{
  \label{fig:Upgrade}
	\fbox{\includegraphics[height=1.9in]{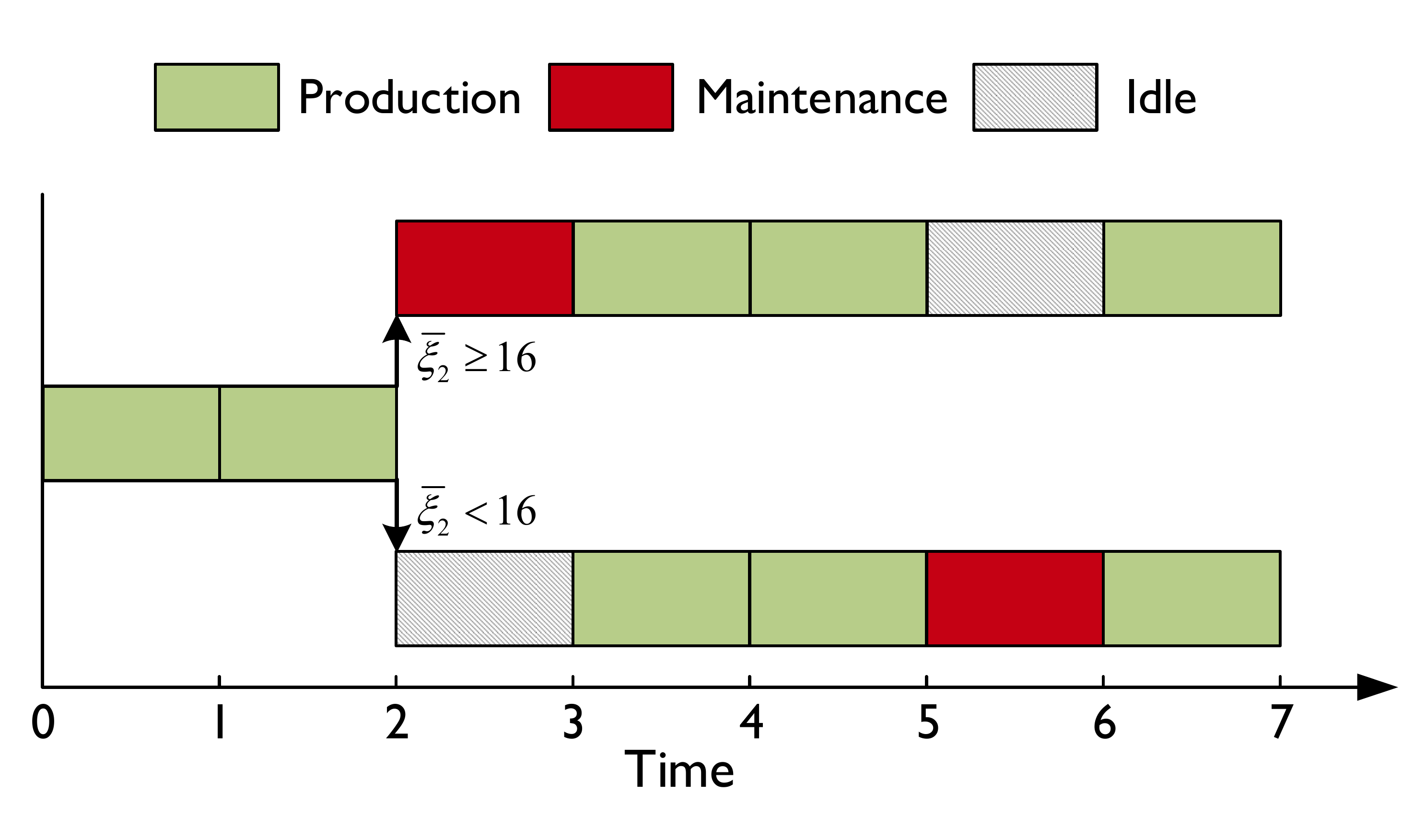}}}
\caption{The Gantt charts show the schedules obtained from solving the ARO model for (a) Case 1 with high and (b) Case 2 with low cost of equipment upgrade.}
\label{fig:Example2}
\end{figure}

\subsection{Optimizing Revision Points in Capacity Planning}

The previous example considers a problem in which the number of stages is affected by decisions due to the decision-dependent observation of uncertain parameters. There are other situations in which uncertain parameters are observed over time, independent of decisions, yet we still would like to restrict or optimize the number of stages. One example is long-term capacity planning, which involves large investments that require commitments and lead time for establishing contracts and infrastructure. Hence, it is often infeasible to change decisions frequently despite new information being constantly revealed over time. Instead, one has to restrict oneself to a few revision points at which the planning decisions are revised and updated. In their recent work, \citet{Basciftci2019} have formally introduced an \textit{adaptive two-stage stochastic programming} approach in which for each decision policy, one revision point is considered and chosen as part of the optimization problem. This problem can be viewed as a stochastic program with type-2b endogenous uncertainty where planning decisions for time periods before the revision point have to be made here and now while decisions after the revision point can be adjusted based on the uncertainty revealed up to the revision point. As a result, our framework can be directly applied to solve a robust optimization variant of the problem and extensions that consider multiple revision points. 

Consider a simplified power generation capacity expansion problem with $T$ time periods and a set of types of generation resources $\mathcal{I}$. At the beginning of each time period $t$ (i.e. at time $t-1$), the uncertain electricity demand for the same time period, $d_t$, is revealed and the amount of electricity generated from each generation type $i$ in time period $t$, $y_{it}$, is determined. Capacity expansion decisions, however, can only be made or updated at revision points. Let $x_{it}$ denote the amount of generation capacity of type $i$ that is added in time period $t$ and becomes available at time $t$. Assume that for each generation type $i$, there are two revision points, one at time $t'_i-1$, immediately after $d_{t'_i}$ is revealed, and the other one at time $t''_i-1$ with $t'_i<t''_i$. Then, all $x_{it}$ with $t < t'_i$ have to be chosen before any demand for the planning horizon is known, all $x_{it}$ with $t'_i \leq t < t''_i$ are chosen at time $t'_i-1$ with known $\bm{d}_{[t'_i]}$, and all $x_{it}$ with $t \geq t''_i$ are chosen at time $t''_i-1$ with known $\bm{d}_{[t''_i]}$. Given a maximum number of revision points for each generation type $i$, denoted by $n_i$, we aim to place the revision points such that a given cost function, in this case a sample average approximation of the expected cost, is minimized.

We introduce binary variables $\hat{z}_{it}$, which equals 1 if capacity expansion decisions for generation type $i$ are revised at time $t-1$, and $z_{itt'}$, which equals 1 if $d_{t'}$ is known when the decision $x_{it}$ is made. The problem can then be formulated as follows:
\begin{subequations}
\label{eqn:CapacityPlanning}
\begin{align}
\minimize_{\bm{z}, \bm{\hat{z}}, \bm{x}, \bm{y}} \quad & \sum_{i \in \mathcal{I}} \alpha_i x_{i0} + \sum_{s \in \mathcal{S}} \phi_s \sum_{t \in \mathcal{T}} \frac{1}{(1+r)^t} \left[ \sum_{i \in \mathcal{I}} \left( \alpha_i \bar{x}_{its} + \beta_i \bar{y}_{its} \right) \right] \label{eqn:CPobj} \\
\st \quad \;\; & \sum_{t \in \mathcal{T}} \hat{z}_{it} \leq n_i \quad \forall \, i \in \mathcal{I} \label{eqn:CPrevision} \\
  & \frac{1}{n_i} \sum_{t''=t'}^t \hat{z}_{it''} \leq z_{itt'} \leq \sum_{t''=t'}^t \hat{z}_{it''} \quad \forall \, i \in \mathcal{I}, \, t \in \mathcal{T}, \, t'=1,\dots,t \label{eqn:CPobserved} \\
  & \hat{z}_{it} \in \{0,1\} \quad \forall \, i \in \mathcal{I}, \, t \in \mathcal{T} \\
  & z_{itt'} \in \{0,1\} \quad \forall \, i \in \mathcal{I}, \, t \in \mathcal{T}, \, t'=1,\dots,t \\
  & x_{it} \leq C_{it} \quad \forall \, i \in \mathcal{I}, \, t \in \mathcal{T} \label{eqn:CPcapbound} \\
  & y_{it} \leq \sum\limits_{t'=1}^t x_{i,t'-1} \quad \forall \, i \in \mathcal{I}, \, t \in \mathcal{T}, \, (\bm{d}_{[t]}, \bm{\zeta}_{[t]}) \in \Theta_t(\bm{z}_{[t]}) \label{eqn:CPcapacity} \\
  & \sum\limits_{i \in \mathcal{I}} y_{it} \geq d_t \quad \forall \,  t \in \mathcal{T}, \, (\bm{d}_{[t]}, \bm{\zeta}_{[t]}) \in \Theta_t(\bm{z}_{[t]}) \label{eqn:CPbalance} \\
  & x_{i0} \in \mathbb{R}_+ \quad \forall \, i \in \mathcal{I} \\
  & y_{it}, \, x_{it} \in \mathbb{R}_+ \quad \forall \, i \in \mathcal{I}, \, t \in \mathcal{T}, \, (\bm{d}_{[t]}, \bm{\zeta}_{[t]}) \in \Theta_t(\bm{z}_{[t]}),
\end{align}
\end{subequations}
where $\mathcal{T}=\{1,\dots,T\}$, $\mathcal{S}$ denotes the set of scenarios, $\alpha_i$ and $\beta_i$ are cost coefficients, and $r$ is a discount factor. Each scenario $s \in \mathcal{S}$ is characterized by a specific electricity demand profile $\bm{\bar{d}}_{[T],s}$ and probability $\phi_s$. In the objective function \eqref{eqn:CPobj}, $\bar{x}_{its} = x_{it}(\bm{z}_{it} \circ \bm{\bar{d}}_{[t],s})$ and $\bar{y}_{its} = y_{it}(\bm{\bar{d}}_{[t],s})$ represent the recourse decisions for specific scenarios. Constraints \eqref{eqn:CPrevision} restrict the number of revision points. Inequalities \eqref{eqn:CPobserved} force $z_{itt'}$ to be 1 if there is at least one revision point between times $t'-1$ and $t-1$ and 0 otherwise. The rationale is that if there is one or multiple revision points between $t'-1$ and $t-1$, then $x_{it}$ will be determined at the latest revision point within that time frame at which point $d_{t'}$ will be known. Constraints \eqref{eqn:CPcapbound} set an upper bound, $C_{it}$, on each $x_{it}$. Capacity constraints are given in \eqref{eqn:CPcapacity}, and constraints \eqref{eqn:CPbalance} state that the total amount of electricity produced has to meet or exceed the demand. The decision-dependent uncertainty set is
\begin{equation*}\small
\Theta_t(\bm{z}_{[t]}) = \left\lbrace
(\bm{d}_{[t]}, \bm{\zeta}_{[t]}) \in \mathbb{R}_+^{K_t}:
\begin{array}{l}
  d_{1}^{\min} \leq d_{1} \leq d_{1}^{\max}  \\ [4pt]
  \delta_{t'}^{\min} d_{t'-1} \leq d_{t'} \leq \delta_{t'}^{\max} d_{t'-1} \quad \forall \, t'=2,\dots,t \\ 
  \sum\limits_{t''=1}^{t'} d_{t''} \leq \sum\limits_{t''=1}^{t'} \bar{\delta}^{t''-1} d_1 \quad \forall \, t'=1,\dots,t \\ [8pt]
  d_{t''} - d^{\max}_{t''} (1-z_{it't''}) \leq \zeta_{it't''} \leq d_{t''} \quad \forall \, i \in \mathcal{I}, \, t'=1,\dots,t, \, t''=1,\dots,t' \\[4pt]
  d^{\min}_{t''} z_{it't''} \leq \zeta_{it't''} \leq d^{\max}_{t''} z_{it't''} \quad \forall \, i \in \mathcal{I}, \, t'=1,\dots,t, \, t''=1,\dots,t'
\end{array}
\right\rbrace,
\end{equation*}
which encodes the correlation between the electricity demands in different time periods as the range in which $d_{t'}$ can vary depends on $d_{t'-1}$ and the budget of uncertainty imposed on $\sum_{t''=1}^{t'} d_{t''}$ is $\sum_{t''=1}^{t'} \bar{\delta}^{t''-1} d_1$, which is a function of $d_1$ with a given parameter $\bar{\delta}$. The auxiliary uncertain parameters $\bm{\zeta}_{[t]}$ depend on the choice of revision points and hence on $\bm{z}_{[t]}$. Note that the vector $\bm{z}_{[t]}$ contains all $z_{it't''}$ with $i \in \mathcal{I}$, $t'=1,\dots,t$, and $t''=1,\dots,t'$. Also, $K_t=t+|\mathcal{I}|t(t+1)/2$. The recourse variables are adjustable given the following dependence on uncertain parameters: $x_{it} = x_{it}(\bm{\zeta}_{[t]})$ and $y_{it} = y_{it}(\bm{d}_{[t]})$.

Notice that $\bm{z}$ and $\bm{\hat{z}}$, which determine the revision points, are all first-stage variables. As a result, the decision-dependent nonanticipativity for the capacity expansion decisions $\bm{x}$ can alternatively be modeled by forcing the decision rule coefficients associated with unobserved uncertain parameters to zero (see Section \ref{sec:Nonanticipativity}). The decision rules with respect to the lifted uncertain parameters are
\begin{equation*}
  x_{it} = \sum_{t'=0}^t \left( \bm{\bar{x}}_{itt'}^{\top} \bm{\bar{d}}_{t'} + \bm{\hat{x}}_{itt'}^{\top} \bm{\hat{d}}_{t'} \right) \quad \forall \, i \in \mathcal{I}, \, t \in \mathcal{T}
\end{equation*}
with $\bar{d}_0=1$ and $\hat{d}_0=0$. Decision-dependent nonanticipativity is then enforced by the following constraints:
\begin{equation*}
\begin{aligned}
  -M z_{itt'} \bm{e} \leq \bm{\bar{x}}_{itt'} \leq M z_{itt'} \bm{e} \quad \forall \, i \in \mathcal{I}, \, t \in \mathcal{T}, \, t'=1,\dots,t \\
  -M z_{itt'} \bm{e} \leq \bm{\hat{x}}_{itt'} \leq M z_{itt'} \bm{e} \quad \forall \, i \in \mathcal{I}, \, t \in \mathcal{T}, \, t'=1,\dots,t
\end{aligned}
\end{equation*}
with $M$ being a sufficiently large parameter. This alternative approach allows us to work with a fixed uncertainty set:
\begin{equation*}
\Theta_t = \left\lbrace
\bm{d}_{[t]} \in \mathbb{R}_+^t:
\begin{array}{l}
  d_{1}^{\min} \leq d_{1} \leq d_{1}^{\max}  \\ [4pt]
  \delta_{t'}^{\min} d_{t'-1} \leq d_{t'} \leq \delta_{t'}^{\max} d_{t'-1} \quad \forall \, t'=2,\dots,t \\ 
  \sum\limits_{t''=1}^{t'} d_{t''} \leq \sum\limits_{t''=1}^{t'} \bar{\delta}^{t''-1} d_1 \quad \forall \, t'=1,\dots,t
\end{array}
\right\rbrace.
\end{equation*}
Moreover, it does not require auxiliary uncertain parameters, which significantly reduces the computational complexity.


In the following case study, we consider a planning horizon of seven time periods and four generation types, namely coal, natural gas-gas turbine (NG-GT), solar PV, and wind. Cost data are adapted from \citet{Min2018}, which, along with all other required data, can be found in the supplementary material. Furthermore, the objective function is constructed using 100 scenarios sampled from a uniform distribution supported on the given uncertainty set, and uncertainty lifting is performed with two equidistantly placed breakpoints for each uncertain parameter.

We solve the capacity planning problem for different maximum allowed number of revision points $\bar{n}$, which is applied to all generation types, i.e. $n_i = \bar{n}$ for all $i \in \mathcal{I}$. The results are shown in Table \ref{tab:Case3-SAA}. As expected, the optimal value decreases with increasing number of revision points. The improvement is most significant from the static case (no revision) to the case with one revision point. Diminishing returns are observed at higher number of revision points. The results show that the locations of the revision points for different generation types are generally not the same. For larger $\bar{n}$, there also seems to be no need to use the maximum number of revision points for all generation types. Finally, note that the solution times are moderate, indicating the potential to solve more complex problems of this kind using the proposed approach.

\begin{table}[ht!]\centering
\setlength\tabcolsep{2.5pt}	
	\caption{Computational results for the capacity planning problem with different allowed numbers of revision points. The relative improvement is computed with respect to the optimal value of the case with no revision.}
	\begin{tabular}{cccccccc}
	\toprule
	\multirow{2}{*}{\makecell{\textbf{Allowed \# of} \\ \textbf{revision points}}} & \multirow{2}{*}{\makecell{\textbf{Optimal} \\ \textbf{value (k\$)}}} & \multirow{2}{*}{\makecell{\textbf{Solution} \\ \textbf{time (s)}}} & \multirow{2}{*}{\makecell{\textbf{Relative} \\ \textbf{improvement}}} & \multicolumn{4}{c}{\textbf{Location of revision points}} \\ 
	& & & & Coal  & NG-GT & Solar PV & Wind \\ \midrule
    0 & 46,183 	& 9 	&   &  &  &  &   \\
    1 & 40,852 	&427 	&11.54\,\%	&   3	&   5	&   1	&   2 \\
    2 & 39,526 	& 1,679 	& 14.42\,\%	&   3, 6	&   2, 5	&   1, 6	&   1, 3 \\
    3 & 38,955 	& 764 	&15.65\,\%	&  2, 3, 6	&   1, 2, 5	&  1, 5, 6	&  1, 3, 5 \\
    4 & 38,612 	& 874 	& 16.39\,\%	&   2, 3, 5, 6	&   1, 2, 3, 5	&   1, 5, 6	&   1, 3, 5 \\
    5 & 38,423 	&592 	&16.80\,\%	&  1, 2, 3, 5, 6	&  1, 2, 3, 4, 5	&  5, 6	&  1, 2, 3, 5 \\
    6 & 38,423 	&462 	&16.80\,\%	&   1, 2, 3, 4, 5, 6	&  1, 2, 3, 4, 5	&  5, 6 & 1, 2, 3, 5 \\
	\bottomrule
	\end{tabular}
	\label{tab:Case3-SAA}
\end{table}


\subsection{Production Scheduling with Active Parameter Estimation}

When operating a manufacturing process, we often do not have accurate information across the whole range of feasible operation due to the lack of process data. A model that we use for process optimization may only be accurate around one or a few nominal points, especially if it is a relatively new process. For example, a production unit may be able to operate in different operating modes, but process parameters such as efficiency and processing time are only accurately known for the one operating mode that is used most of the time. The models of other, potentially better-performing operating modes can only be improved by actually operating in those modes and collecting more data; however, this exploration action comes with the risk of operating under conditions with inferior performance or damaging impact. In the following example, we consider process scheduling with active model parameter estimation, which integrates scheduling optimization and design of experiments in order to find an optimal trade-off between exploration and exploitation that maximizes the overall plant performance over time. The problem is formulated as follows:
\begin{equation}
\label{eqn:SchedulingEstimation}
\begin{aligned}
\minimize_{\bm{x}, \bm{s}} \quad & \max_{\bm{p} \in \mathcal{P}_T(\bm{x})} \sum_{t \in \mathcal{T}} \sum_{i \in \mathcal{I}} \sum_{j \in \mathcal{J}_i} \bm{c}^{\top} \bm{u}_{ij} \, x_{tij}(\bm{p}_{[t]}) + \delta s_t(\bm{p}_{[t]}) \\
\st \quad & s_t(\bm{p}_{[t]}) = s_0 + \sum_{t'=1}^t \left( \sum_{i \in \mathcal{I}} \sum_{j \in \mathcal{J}_i} p_{t'ij} - d_{t'} \right) \leq s^{\max} \quad \forall \, t \in \mathcal{T}, \, \bm{p}_{[t]} \in \mathcal{P}_t(\bm{x}_{[t]}) \\
& \sum_{i \in \mathcal{I}} \sum_{j \in \mathcal{J}_i} x_{tij}(\bm{p}_{[t]}) \leq 1  \quad \forall \, t \in \mathcal{T}, \, \bm{p}_{[t]} \in \mathcal{P}_t(\bm{x}_{[t]}) \\
&  s_t(\bm{p}_{[t]}) \in \mathbb{R}_+ \quad \forall \, t \in \mathcal{T}, \, \bm{p}_{[t]} \in \mathcal{P}_t(\bm{x}_{[t]}) \\
& x_{tij}(\bm{p}_{[t]}) \in \{0,1\} \quad \forall \, t \in \mathcal{T}, \, i \in \mathcal{I}, \, j \in \mathcal{J}_i, \, \bm{p}_{[t]} \in \mathcal{P}_t(\bm{x}_{[t]}),
\end{aligned}
\end{equation}
where we assume that the plant operating point is chosen among a discrete set of operating points, $\mathcal{J}_i$, which depends on operating mode $i$. Each operating point $j$ of mode $i$ is defined by a vector of inputs, $\bm{u}_{ij}$. The binary variable $x_{tij}$ equals 1 if the plant operates at point $j$ of mode $i$ in time period $t$. The amount produced, $p_{tij}$, is treated as an uncertain parameter since the model parameters for the corresponding mode $i$ may be uncertain. Given cost coefficients $\bm{c}$ and $\delta$, the objective is to minimize the worst-case cost over the entire scheduling horizon $\mathcal{T} = \{1,\dots,T\}$. With an initial inventory $s_0$, the inventory level at time $t$, $s_t$, is bounded between zero and $s^{\max}$. The product demand in time period $t$ is denoted by $d_t$.

We assume that the production amount in each operating mode $i$ is given by a linear function $p = a_i + \bm{b}_i^{\top} \bm{u}$. For some modes, the model parameters $a_i$ and $\bm{b}_i$ are unknown at the beginning of the scheduling horizon, and they can only be estimated by operating in those modes and observing the resulting production amounts. This is captured in the following decision-dependent uncertainty set:
\begin{equation*}\small
\mathcal{P}_t(\bm{x}_{[t]}) = \left\lbrace
\bm{p}_{[t]} \in \mathbb{R}_+^{K_t}:
\begin{array}{l}
  \exists \, a_i, \, \bm{b}_i \; \forall \, i \in \mathcal{I} \quad \text{such that} \\ [4pt]
  0 \leq p_{t'ij} \leq p^{\max}_i x_{t'ij} \quad \forall \, t'=1,\dots,t, \, i \in \mathcal{I}, \, j \in \mathcal{J}_i \\[4pt]
  a_i + \bm{b}_i^{\top} \bm{u}_{ij} - p_i^{\max} (1 - x_{t'ij}) \leq p_{t'ij} \leq a_i + \bm{b}_i^{\top} \bm{u}_{ij} \quad \forall \, t'=1,\dots,t, \, i \in \mathcal{I}, \, j \in \mathcal{J}_i \\[4pt]
  a^{\min}_i \leq a_i \leq a^{\max}_i \quad \forall \, i \in \mathcal{I} \\ [4pt]
  \bm{b}^{\min}_i \leq \bm{b}_i \leq \bm{b}^{\max}_i \quad \forall \, i \in \mathcal{I}
\end{array}
\right\rbrace
\end{equation*}
with $K_t = t \sum_{i \in \mathcal{I}} |\mathcal{J}_i|$. Only if $x_{tij}=1$, the production amount $p_{tij}$ can only take a nonzero value, which depends on the chosen operating point. Each operating point that has been selected up to time period $t$ activates a linear equation, which can be interpreted as a cut that reduces the size of the uncertainty set. As a result, the linear model of an operating mode is learned through the accumulation of observations, which are the production amounts at different operating points. If $\bm{u}_{ij} \in \mathbb{R}^n$, and all model parameters of mode $i$ are unknown, one needs to operate in mode $i$ at different operating points at least $n+1$ times in order to uniquely determine $a_i$ and $\bm{b}_i$.

We use a small instance to illustrate the effect of active learning in this problem. The data are as follows: $T = 4$, $c = 5$, $\delta = 2$, $\bm{d} = (32.5, 15, 51, 42.5)$, $s^{\max} = 100$ and $s_0 = 0$. We consider two operating modes and two possible operating points per mode, with each operating point $j \in \mathcal{J}_i$ defined by a scalar input $u_{ij}$. We have $u_{11} = 10$, $u_{12} = 46$, $u_{21} = 25$, and $u_{22} = 44$. The values of the parameters describing the uncertainty set are shown in Table \ref{tab:Example4Parameters}. Here, we assume that the plant has not been operated in any of the two modes prior to the beginning of the scheduling horizon.

\begin{table}[ht!]\centering
\setlength\tabcolsep{10pt}	
	\caption{Parameters for the description of the uncertainty set in the production scheduling with active parameter estimation problem.}
	\begin{tabular}{cccccc}
	\toprule
     & $p^{\max}_i$ & $a^{\min}_i$  & $a^{\max}_i$  & $b^{\min}_i$ & $b^{\max}_i$   \\
    \midrule
    \textbf{Mode 1} & 65.2 & 5  & 10  & 1    & 1.2  \\
    \textbf{Mode 2} & 56.2 & 20 & 21  & 0.5  & 0.8  \\
 	\bottomrule
	\end{tabular}
	\label{tab:Example4Parameters}
\end{table}

The optimal adjustable production schedule is shown in Figure \ref{fig:Example4-GanttChart}. The solution suggests to first operate at point 1 of mode 2 and choose the subsequent schedule depending on the realization of $p_{121}$. The schedule at the top is selected if $p_{121} \geq 36.75$, and the bottom schedule is selected if $p_{121} < 36.75$. One can see that in both schedules, the operating points selected in time periods 2 and 3 are the same; the schedules only differ in the operating mode chosen for time period 4. By only observing the production amount at operating point 1 of mode 2, we cannot fully infer the model for mode 2, i.e. determine $a_2$ and $b_2$. However, the observation reduces the uncertainty for mode 2, which allows us to determine in which mode the plant should operate in time period 4.

\begin{figure}[ht!]\centering
\includegraphics[width=3in]{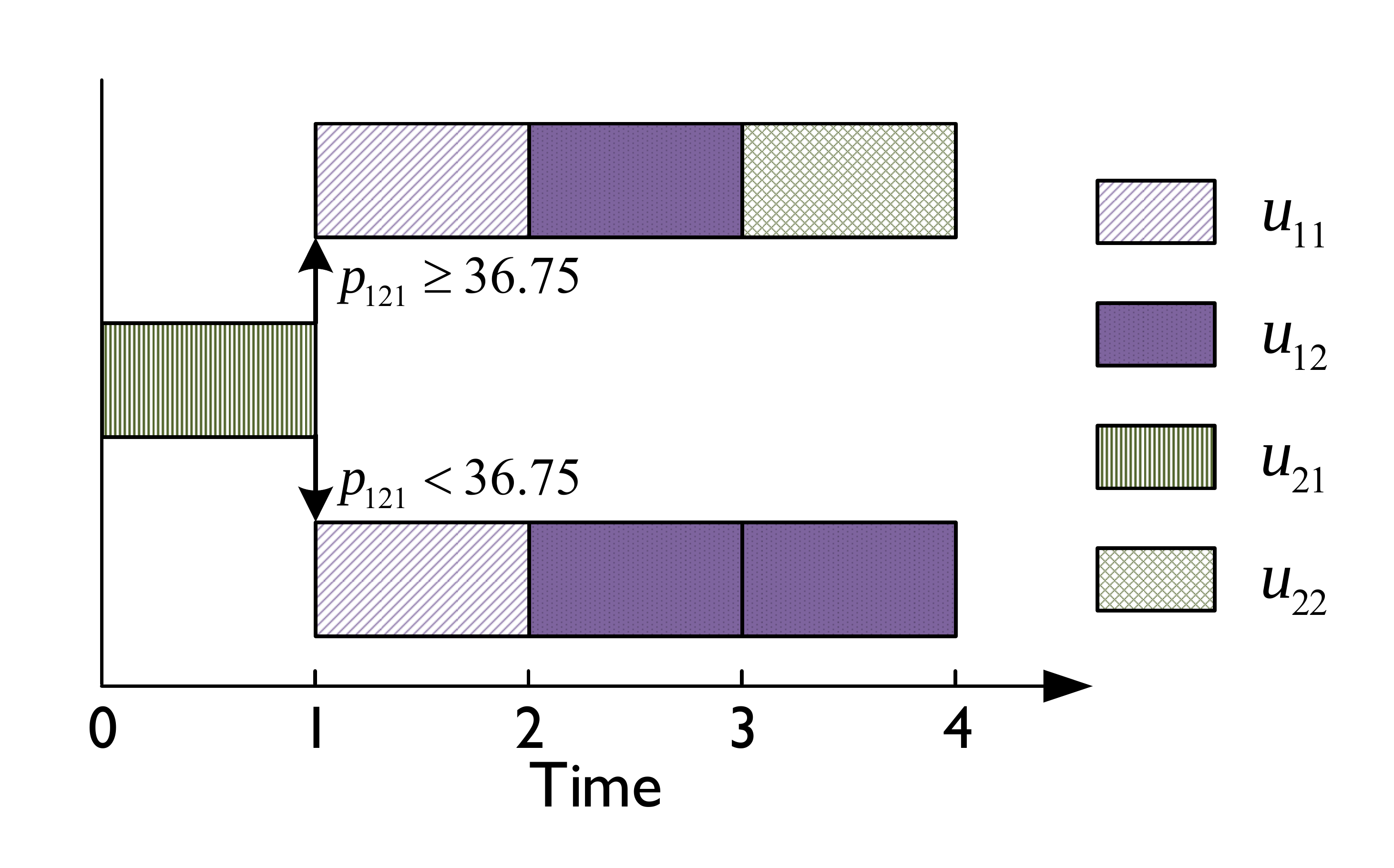}
\caption{Optimal adjustable production schedule with active parameter estimation. The solution indicates that recourse is only required once.}
\label{fig:Example4-GanttChart}
\end{figure}

The effect of learning is further illustrated in Figure \ref{fig:Example4-Case1}, which shows how the uncertainty set is reduced over time in a particular scenario with $p_{121} > 36.75$, in which case the schedule at the top of Figure \ref{fig:Example4-GanttChart} is applied. Each of the four subfigures shows the production amount as a function of the input for a specific time period and the mode chosen for that time period. The black solid vertical lines indicate the marginal uncertainty sets of the production amounts at the available operating points before any observations are made, as given by the ranges of possible values for $a_i$ and $b_i$. In this particular scenario, a production amount of 39 is observed in time period 1 when the plant operates at point 1 of mode 2. Given this observation and $a_2 \in [20,21]$, the uncertainty with respect to $b_2$ is immediately reduced such that the line representing the linear model of mode 2 has to lie within the cyan shaded area shown in Figure \ref{fig:Example4-Case1-Period1}. One can see that this results in a substantial reduction of uncertainty with respect to the unobserved operating point 2 of mode 2. The same effect is seen in time period 2 when the plant operates at point 1 of mode 1 (Figure \ref{fig:Example4-Case1-Period2}). After operating at point 2 of mode 1 in time period 3, $a_1$ and $b_1$ can be uniquely determined given the observations $p_{211}$ and $p_{312}$. The linear model of mode 1 is represented by the red solid line in Figure \ref{fig:Example4-Case1-Period3}. Finally, the linear model of mode 2 will also be exactly known after operating at point 2 of mode 2 in time period 4, as indicated in Figure \ref{fig:Example4-Case1-Period4}.

\begin{figure}[ht]\centering
\subfloat[Period 1 - mode 2]{
  \label{fig:Example4-Case1-Period1}
	\fbox{\includegraphics[height=2.5in]{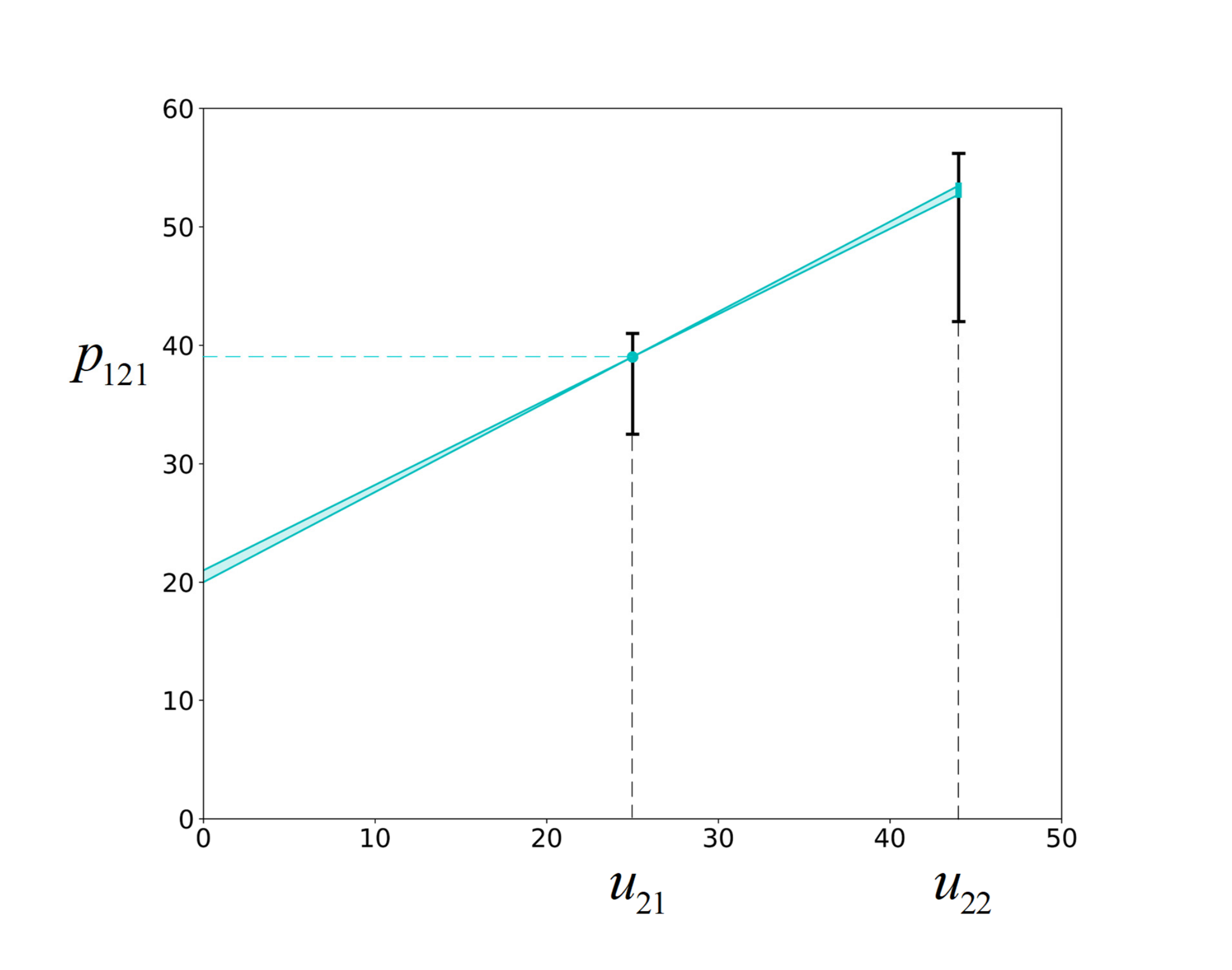}}} 
\subfloat[Period 2 - mode 1]{
  \label{fig:Example4-Case1-Period2}
	\fbox{\includegraphics[height=2.5in]{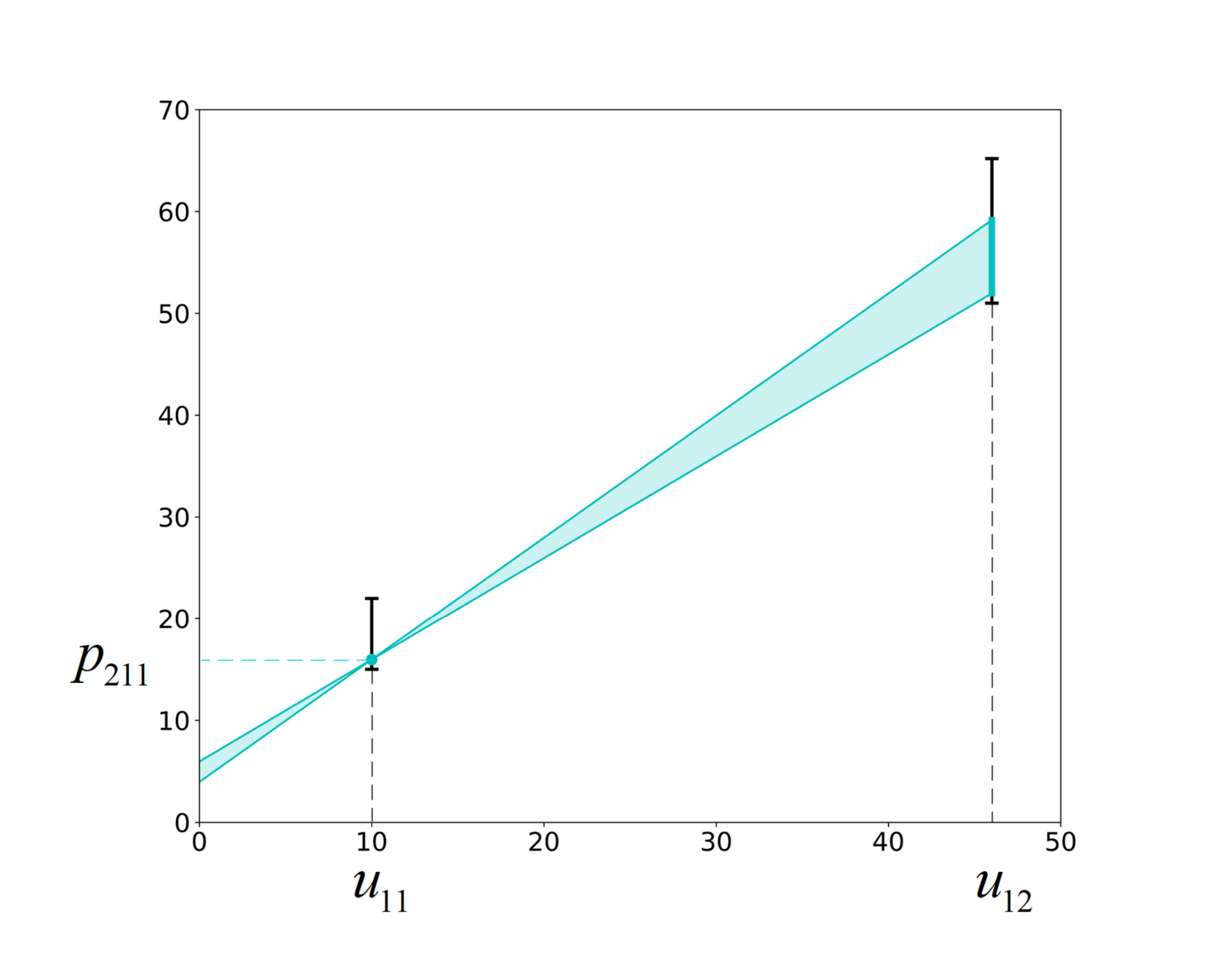}}} \\
\subfloat[Period 3 - mode 1]{
  \label{fig:Example4-Case1-Period3}
	\fbox{\includegraphics[height=2.5in]{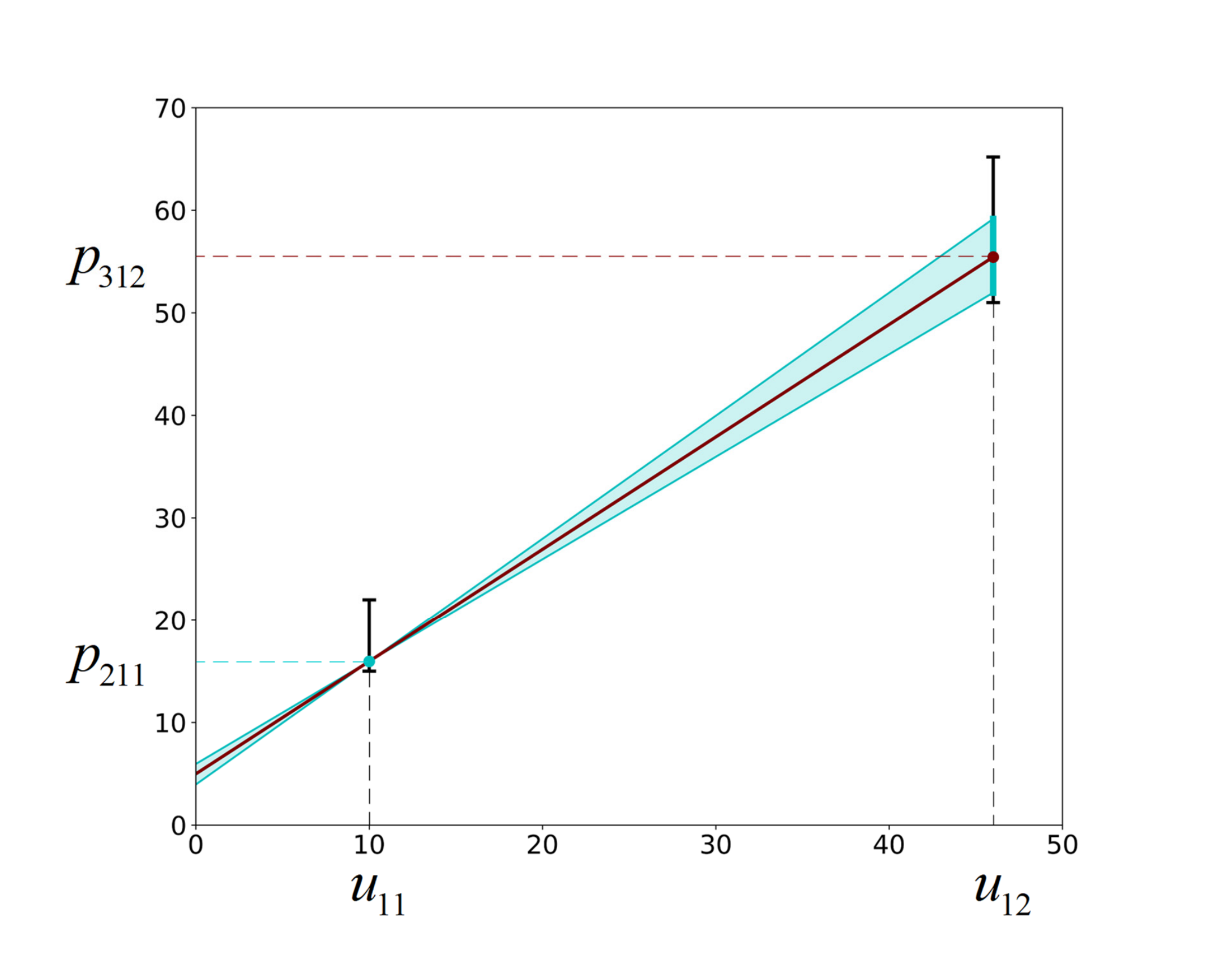}}} 
\subfloat[Period 4 - mode 2]{
  \label{fig:Example4-Case1-Period4}
	\fbox{\includegraphics[height=2.5in]{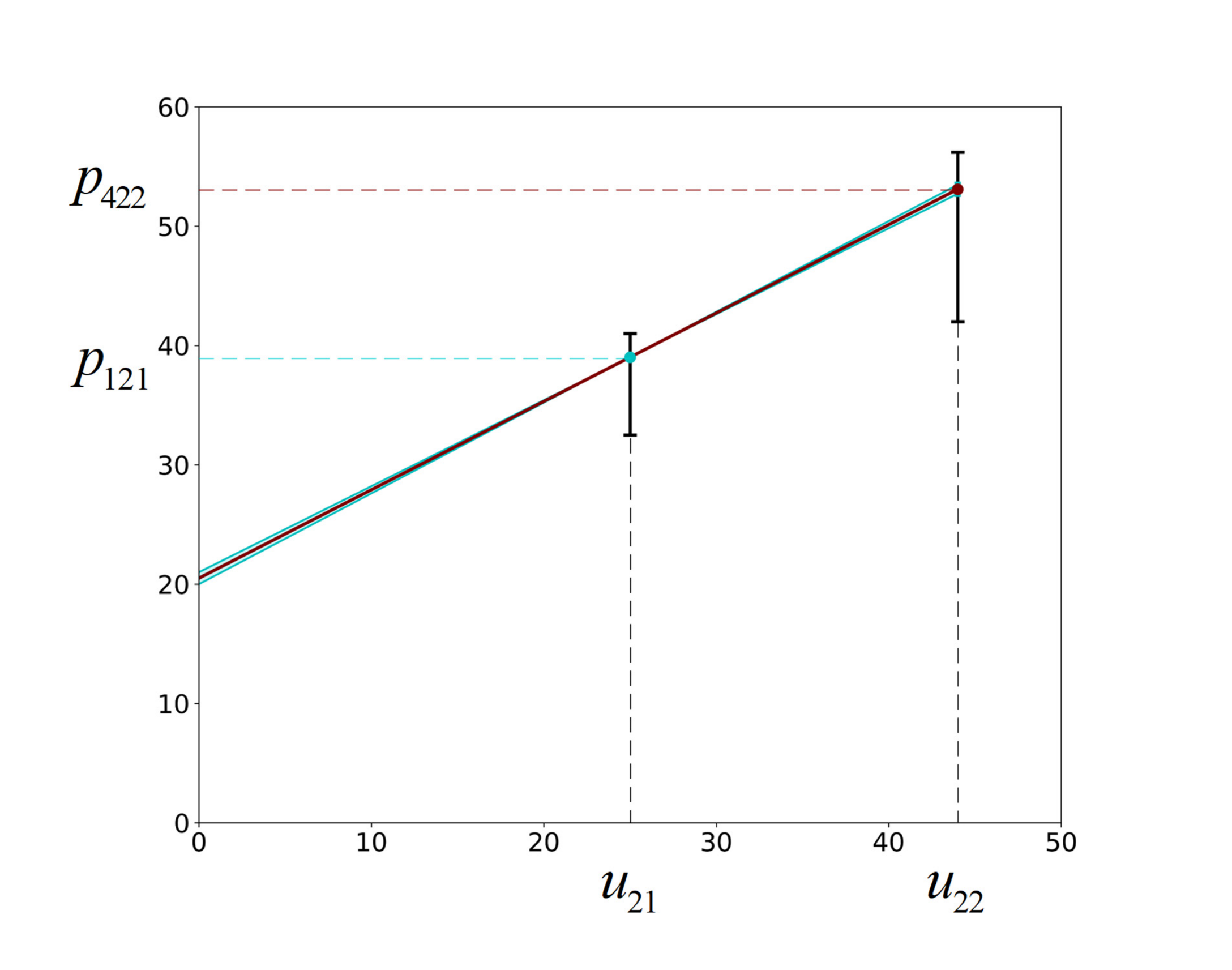}}}	
\caption{Reduction of uncertainty by observing materialized production amounts over time for one specific scenario (defined by one specific set of observations).}
\label{fig:Example4-Case1}
\end{figure}

The optimal value of the problem is 819.2. Despite the simplicity of the illustrative example, the problem is computationally challenging. This is in part due to the large number of variables and constraints resulting from the reformulation, but even more due to the weak relaxation of the MILP. The latter is indicated by a lower bound that only improves very slowly in the branch-and-bound algorithm, despite the incumbent being the optimal solution already. Computational strategies for mitigating these challenges will be explored and discussed in future work.

\section{Conclusions}
\label{sec:Conclusions}

In this work, we have developed an adjustable robust optimization framework that can simultaneously consider three types of endogenous uncertainty: type 1, type 2a, and type 2b, as defined in a new refined classification of endogenous uncertainty that distinguishes between the decision-dependent \textit{materialization} and \textit{observation} of uncertain parameters. One compelling insight from this extended view is that active learning, a sequential form of experimental design, can be formulated as an optimization problem under endogenous uncertainty. As a result, the proposed approach provides a means of performing integrated optimization and active learning in a sequential decision-making environment.

Our framework considers decision-dependent polyhedral uncertainty sets, and we have applied a decision rule approach based on the concept of lifted uncertainty that incorporates continuous and binary recourse, including recourse decisions that affect the uncertainty set. Multistage decision-dependent nonanticipativity is modeled using auxiliary uncertain parameters, which preserves the tractability of the decision rule approach. This eventually results in a mixed-integer optimization problem that can be directly solved using off-the-shelf solvers.

The proposed methodology significantly expands our capability to model and solve data-driven optimization problems under uncertainty, particularly those involving active learning. Computational challenges still exist, which we plan to address in our future work. Nonetheless, this approach promises to be especially suited for applications that involve complex constraints, expensive exploration actions, and strong links between different decision stages. To highlight the relevance of these kinds of problems, we have conducted computational case studies covering a variety of applications. The results demonstrate the versatility of the proposed modeling framework as well as the importance of endogenous uncertainty. As these aspects become increasingly relevant in PSE applications, we hope that this work can contribute toward the development of more comprehensive and efficient methods for such problems.

\section*{Acknowledgments}
Wei Feng gratefully acknowledges the financial support from the China Scholarship Council (CSC) (No. 201906320317).


\bibliographystyle{unsrtnat}
\bibliography{library}

\end{document}